\title{The double Cayley Grassmannian}
\author{Laurent Manivel}
\date{\today}
\address{Institut de Math\'ematiques de Toulouse ; UMR 5219, Universit\'e de Toulouse \& CNRS, F-31062 Toulouse Cedex 9, France}
\email{manivel@math.cnrs.fr}
\subjclass[2020]{14D15, 14M17, 14M27, 20G41}
\keywords{Exceptional Lie group, octonion, spin representation, symmetric variety, Fano manifold, wonderful variety, vector bundle, rigidity, VMRT}
\theoremstyle{plain}
\newtheorem{theorem}{Theorem}
\newtheorem*{conjecture*}{Conjecture}
\newtheorem{prop}[theorem]{Proposition}
\newtheorem{lemma}[theorem]{Lemma}
\newtheorem{coro}[theorem]{Corollary}
\def\CC{{\mathbb{C}}}
\def\RR{{\mathbb{R}}}
\def\OO{{\mathbb{O}}}
\def\PP{{\mathbb{P}}}
\def\QQ{{\mathbb{Q}}}\def\ZZ{{\mathbb{Z}}}
\def\SS{{\mathbb{S}}}
\def\cO{{\mathcal{O}}}
\def\cE{{\mathcal{E}}}
\def\cF{{\mathcal{F}}}
\def\cG{{\mathcal{G}}}
\def\cL{{\mathcal{L}}}
\def\cU{{\mathcal{U}}}
\def\cV{{\mathcal{V}}}
\def\ra{{\rightarrow}}
\def\lra{{\longrightarrow}}
\def\ft{{\mathfrak t}}
\def\fg{{\mathfrak g}}
\begin{document}

\begin{abstract}
We study the smooth projective symmetric variety of Picard number one
that compactifies the exceptional complex Lie group $G_2$, by 
describing it in terms of vector bundles on the spinor variety of 
$Spin_{14}$. We call it the double Cayley Grassmannian because quite
remarkably, it 
exhibits very similar properties to those of the Cayley Grassmannian
(the other symmetric variety of type $G_2$), but doubled in the certain 
sense. We deduce among other things that all smooth projective symmetric 
varieties of Picard number one are infinitesimally rigid.
\end{abstract}

\maketitle

\section{Introduction}

Symmetric spaces have been of constant interest since their classification by Elie Cartan 
in 1926. In complex algebraic geometry, projective symmetric varieties of Picard number 
one have been classified by Alessandro Ruzzi in 2011 \cite{ruzzi}. Some of them are in 
fact homogeneous under their full automorphism group. Some others are just hyperplane 
sections of homogeneous spaces. 

The two remaining ones are more mysterious, among other 
things because of their connections with the exceptional group $G_2$. These connections 
prompted us to call the first of them the {\it Cayley Grassmannian}, and denote it $CG$;
its geometry and its cohomology (including its small quantum cohomology) were studied in 
\cite{cayley, bm}. The second one is the subject of the present paper; we will call it the 
{\it double Cayley Grassmannian}, and denote it $DG$. 

This terminology is supported 
by the observation that many important properties of $CG$ are also observed for $DG$, 
but {\it doubled} in a certain way. Let us give an overview of a few of them, first for 
the Cayley Grassmannian:
\begin{enumerate}
\item $CG$ compactifies $G_2/SL_2\times SL_2$, acted on by $G_2$,
\item $CG$ parametrizes four dimensional subalgebras of the complex octonion algebra $\OO$,  
\item $CG$ can be described as the zero locus of a general section of a rank $4$ homogeneous 
vector bundle  on the Grassmannian $G(4,V_7)$, where $V_7\simeq Im\OO$ is the natural 
representation of $G_2$, 
\item its linear span in the Pl\"ucker embedding is $\PP(\CC\oplus S^2V_7)$, 
\item its $G_2$-equivariant Hilbert series is $(1-t)^{-1}(1-tV_{2\omega_1})^{-1}
(1-t^2V_{2\omega_2})^{-1}$, 
\item its topological Euler characteristic is $\chi_{top}(CG)={6\choose 2}$,
\item $CG$ admits three orbits under the action of $G_2$, the complement of the open 
one being a hyperplane section, and the closed one being the quadric $\QQ_5$, 
\item if we blowup the closed orbit, we obtain the wonderful compactification of 
$G_2/SL_2\times SL_2$, with the two exceptional divisors
$$E\simeq \PP(Sym^2C)\ra\;\QQ_5 \qquad \mathrm{and} \qquad F\simeq \PP(Sym^2N)\ra X_{ad}(G_2),$$
where $\QQ_5\simeq G_2/P_1$ and $X_{ad}(G_2)\simeq G_2/P_2$ are the two generalized Grassmannians 
of $G_2$, with their $G_2$-homogeneous rank two vector bundles: the {\it Cayley bundle} $C$ over 
$\QQ^5$ and the {\it null bundle} $N$ over  $X_{ad}(G_2)$.
\end{enumerate}
We find it quite remarkable that the double Cayley Grassmannian $DG$ exhibits the very
same properties, in the following "doubled" form:
\begin{enumerate}
\item $DG$ compactifies $G_2$, acted on by $G_2\times G_2$, 
\item $DG$ parametrizes eight dimensional subalgebras of the complex bioctonion algebra $\OO\otimes \CC$,  
\item $DG$ can be described as the zero locus of a general section of a rank $7$ homogeneous 
vector bundle on the spinor variety $\SS_{14}=Spin_{14}/P_7$,
\item its linear span in the spinorial embedding is $\PP(\CC\oplus V_7\otimes V'_7)$, where 
$V_7$ and $V'_7$ are the natural representations of the two copies of $G_2$,
\item its %$G_2\times G_2$-
equivariant Hilbert series is $(1-t)^{-1}(1-tV_{\omega_1+\omega'_1})^{-1}
(1-t^2V_{\omega_2+\omega'_2})^{-1}$, 
\item its topological Euler characteristic is $\chi_{top}(DG)=6^2$,
\item $DG$ admits three orbits under the action of $G_2\times G_2$, the complement of the open 
one being a hyperplane section, and the closed one being $\QQ_5\times\QQ_5$, 
\item if we blowup the closed orbit, we obtain the wonderful compactification of 
$G_2$, with the two exceptional divisors
$$E\simeq \PP(C\boxtimes C')\ra\;\QQ_5\times\QQ_5 \quad \mathrm{and} \quad 
F\simeq \PP(N\boxtimes N')\ra X_{ad}(G_2)\times X_{ad}(G_2).$$
\end{enumerate}
The main body of the paper will be devoted to the proof of these properties. In a sense, 
the whole story is hidden in the observation, already found in \cite[Proposition 40]{sk}, 
that $Spin_{14}$ acts almost transitively on the projectivization of its half-spin 
representations, with generic stabilizer $G_2\times G_2$. An important consequence 
is the {\it multiplicative double-point property} used in \cite{am} in order to obtain
a remarkable matrix factorization of the octic invariant of these
representations. We will use this property in an essential way in order to understand
the geometry of $DG$. 

We have not been able to describe its cohomology, partly because the number of classes 
is too big. In principle one should be able to 
deduce it from the cohomology of its blowup along the closed orbit, which should 
be  accessible using \cite{brioninfinity, bj, strickland}. What we have been
able to check is that $DG$ is infinitesimally rigid, a question motivated by a longstanding
interest for the rigidity properties  of homogeneous and quasi-homogeneous spaces (see for example 
\cite{hm, fuman, pk}). This concludes the proof of the following statement:

\medskip\noindent {\bf Proposition}. 
{\it 
Every smooth projective symmetric variety of Picard number one is infinitesimally rigid.} 

\medskip
Along the way, when discussing the geometry of $DG$, we will meet two varieties, admitting
an action of $G_2\times G_2$, which are Fano manifolds of Picard number one, and as
such would deserve special consideration (see Propositions $16$ and $18$). This 
illustrates, once again, the amazing wealth of beautiful geometric objects related 
to the exceptional Lie groups. 

\medskip\noindent {\it Acknowledgements}. We thank Sasha Kuznetsov, Kyeong-Dong Park,
Boris Pasquier and Nicolas Perrin for their useful comments and hints.

\section{Geometric description}

\subsection{Fano symmetric varieties of Picard number one}
Ruzzi proved in \cite{ruzzi} that there exist exactly six smooth projective symmetric varieties 
of Picard number one which are not homogeneous. One of them is a completion of $G_2$, considered
as the symmetric space $(G_2\times G_2)/G_2$. From \cite{ruzzi} we can extract the following 
information. 
\begin{enumerate}
\item The symmetric space $G_2$ admits a unique smooth equivariant completion with Picard 
number one, that we denote $DG$.
\item The connected automorphism group of $DG$ is $G_2\times G_2$; it has index two inside the full 
automorphism group.
\item Under the action of $G_2\times G_2$, the variety $DG$ has exactly three orbits: the open
one, a codimension one orbit $\cO_1$, and a closed orbit $\cO_4\simeq \QQ_5\times\QQ_5$. 
The closure $D$ of $\cO_1$ is singular along $\cO_4$. 
\item The blow up of $DG$ along its closed orbit is the wonderful compactification of $G_2$.
\item Consider the spinor variety $\SS_{14}\subset\PP\Delta$, the closed $Spin_{14}$-orbit 
inside a projectivized half-spin representation; then $DG$ can be realized as a linear 
section of $\SS_{14}$ by a linear subspace of codimension $14$. 
\end{enumerate}
The last statement provides a geometric realization of $DG$ which is not so useful, 
since the linear subspace is highly non transverse (note that $\SS_{14}$ has dimension $21$). 
Our first observation is that a more satisfactory description can be given in terms of 
vector bundles.

\subsection{Octonionic factorization} We will need some extra information on 
half-spin representations. Let $V_{14}$ be a fourteen dimensional complex vector space endowed with a non degenerate quadratic form. Let $\Delta$ be one of the half-spin representations of 
$Spin_{14}$. Its dimension is $64$, and the action of the $91$-dimensional group $Spin_{14}$ 
on $\PP\Delta$ is prehomogeneous.

Recall that 
if we fix a maximal isotropic subspace $E$ of $V_{14}$, we can identify the half-spin 
representation $\Delta$ with the even part $\wedge^+E$ of the exterior algebra $\wedge^\bullet E$. For $e_1,\ldots , e_7$ a besis of $E$, let us denote $e_{ij}=e_i\wedge e_j$, and so on. 
A general element of $\Delta$ is then 
$$z=1+e_{1237}+e_{4567}+e_{123456}$$
The stabilizer of $z$ in $Spin_{14}$ is locally isomorphic with $G_2\times G_2$ (see
\cite[Proposition 40]{sk} or \cite[Proposition 2.1.1]{am}). 
The following statement was proved in \cite{am}.

\begin{prop} A general element $z$ of $\Delta$ determines an orthogonal decomposition 
$V_{14}=V_7\oplus V'_7$. This yields a factorization of $\Delta$ as $\Delta_8\otimes\Delta'_8$, 
for $\Delta_8$ and $\Delta'_8$ the spin representations of $Spin(V_7)$ and $Spin(V'_7)$, such that $z=\delta\otimes\delta'$ for some general $\delta\in\Delta_8$ and $\delta'\in\Delta'_8$. \end{prop}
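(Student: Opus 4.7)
The plan is to extract everything from the single input that the generic stabilizer of $Spin_{14}$ on $\Delta$ is $H\simeq G_2\times G_2$, together with the classical fact that $G_2$ arises as the generic stabilizer of $Spin_7$ on its $8$--dimensional spin representation $\Delta_8$. So the starting point is: fix a generic $z\in\Delta$ and let $\mathfrak{h}=\mathfrak{g}_2\oplus\mathfrak{g}_2\subset\mathfrak{so}_{14}$ be its infinitesimal stabilizer.

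First I would recover the orthogonal decomposition from $\mathfrak{h}$. Each $\mathfrak{g}_2$-factor embeds in $\mathfrak{so}_{14}$ so as to preserve a $7$-dimensional non-degenerate quadratic subspace on which it acts by its standard representation, and to act trivially on the orthogonal complement; this is forced because $\mathfrak{g}_2$ has no faithful representation of dimension less than $7$, and the only $14$-dimensional orthogonal representation of $\mathfrak{g}_2\oplus\mathfrak{g}_2$ compatible with the embedding in $\mathfrak{so}_{14}$ is $V_7\oplus V'_7$. This produces the wanted orthogonal splitting $V_{14}=V_7\oplus V'_7$, together with a factorization of the infinitesimal embedding through $\mathfrak{spin}_7\oplus\mathfrak{spin}'_7\subset\mathfrak{so}_{14}$.

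Next I would invoke the standard branching rule for half-spin representations under the embedding $Spin_7\times Spin'_7\hookrightarrow Spin_{14}$ associated with an orthogonal decomposition of $V_{14}$: one of the half-spin representations restricts as $\Delta_8\otimes\Delta'_8$ (and the other as the dual tensor factorization). Choosing the $Spin_7\times Spin'_7$-conjugate so that $\Delta$ restricts as $\Delta_8\otimes\Delta'_8$, the problem of describing $z$ reduces to finding the line of $G_2\times G_2$-invariants inside $\Delta_8\otimes\Delta'_8$. Using the well-known branching $\Delta_8\vert_{G_2}=\CC\oplus V_7$, one gets
\[
(\Delta_8\otimes\Delta'_8)\vert_{G_2\times G_2}=\CC\oplus V_7\oplus V'_7\oplus (V_7\boxtimes V'_7),
\]
whose invariant part is one-dimensional and spanned by $\delta\otimes\delta'$, where $\delta\in\Delta_8$ and $\delta'\in\Delta'_8$ are generators of the $G_2$-invariant (resp.\ $G_2$-invariant) lines. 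Since $G_2$ is precisely the generic stabilizer of $Spin_7$ on $\Delta_8$, the vector $\delta$ is a general element of $\Delta_8$, and similarly for $\delta'$; by rescaling them we may arrange $z=\delta\otimes\delta'$ on the nose.

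The main obstacle, and the point where care is required, is the second step: showing that the embedding $\mathfrak{g}_2\oplus\mathfrak{g}_2\hookrightarrow\mathfrak{so}_{14}$ coming from the stabilizer of a generic $z$ really is the diagonal one through $\mathfrak{so}_7\oplus\mathfrak{so}'_7$, rather than some twisted or irreducible embedding. A clean way to rule out alternatives is a dimension count, together with the classification of $14$-dimensional (self-dual) representations of $\mathfrak{g}_2\oplus\mathfrak{g}_2$: the only one whose generic element in the associated spinor representation has stabilizer of dimension at least $\dim(G_2\times G_2)=28$ is the reducible $V_7\oplus V'_7$. Once this rigidity is established, the rest of the argument is just invariant-theoretic bookkeeping.
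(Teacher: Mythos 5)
Your argument is correct, but it takes a genuinely different route from what the paper does. The paper does not prove this proposition at all: it cites \cite{am} for the statement and then simply exhibits the decomposition explicitly for the representative $z=1+e_{1237}+e_{4567}+e_{123456}$, writing down $V_7$, $V'_7$, $\delta=1+e_{123}$ and $\delta'=1+e_{456}$ by hand (which, by transitivity of $Spin_{14}$ on the open orbit, is a complete proof). You instead derive everything abstractly from the single input that the generic infinitesimal stabilizer is $\fg_2\oplus\fg_2$: you observe that $V_{14}$ is then a faithful $14$-dimensional orthogonal representation of $\fg_2\oplus\fg_2$, that the only such representation is $V_7\oplus V'_7$ (forcing the embedding to factor through $\mathfrak{so}_7\oplus\mathfrak{so}'_7$ and the two summands to be mutually orthogonal), that the standard Clifford-algebra branching gives $\Delta\cong\Delta_8\otimes\Delta'_8$, and that $z$ must span the one-dimensional space of $\fg_2\oplus\fg_2$-invariants, namely $\CC\,\delta\otimes\delta'$. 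Your approach makes clear that the factorization is forced by the stabilizer computation alone, which is conceptually satisfying; the paper's explicit coordinates, on the other hand, are reused later in the text (to compute $L_z$, $W_0$, the null planes, etc.), so they are not dispensable for the paper's purposes. One step where you are slightly quick: you infer that $\delta$ is general in $\Delta_8$ directly from the fact that $G_2$ is the generic stabilizer. Strictly, one should note either that $G_2$ is a maximal subgroup of $Spin_7$ (so any nonzero vector it fixes has stabilizer exactly $G_2$, hence lies in the open orbit), or equivalently that the trivial summand $\CC\subset\Delta_8=\CC\oplus V_7$ cannot pair with the inequivalent summand $V_7$ and is therefore anisotropic for the $Spin_7$-invariant quadratic form, so $q(\delta)\neq0$. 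This is easy to fix and does not affect the validity of the argument.
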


Explicitely, for $z=1+e_{1237}+e_{4567}+e_{123456}$ we get an orthogonal decomposition 
of $V_{14}$ as the direct sum of the two spaces
$$V_7=\langle e_1,e_2,e_3,f_1,f_2,f_3,e_7-f_7\rangle , \qquad  
V'_7=\langle e_4,e_5,e_6,f_4,f_5,f_6,e_7+f_7\rangle ,$$
such that each copy of $G_2$ acts naturally on one of them, and trivially on the other one. 
Moreover $\delta=1+e_{123}$ and $\delta'=1+e_{456}$. The stabilizer of $\delta$ (resp. $\delta'$)
in $Spin(V_7)$ (resp. $Spin(V'_7)$) is the corresponding $G_2$. 

Let us analyze how $\Delta$ decomposes as a $G_2\times G_2$-module. As a 
$Spin_7\times Spin_7$-module, we have just mentionned that 
$\Delta$ is a tensor product $\Delta_8\otimes \Delta'_8$
of eight-dimensional spin representations. Moreover we can identify $\Delta_8$ with
$\wedge^\bullet A$ and $\Delta'_8$ with $\wedge^\bullet A'$, where $A=\langle e_1, e_2, e_3\rangle$ and $A'=\langle e_4, e_5, e_6\rangle$. Now, the restriction of $\Delta_7$
to $G_2$ decomposes as $\CC\oplus V_7$, so that finally 
$$\Delta \simeq V_7\otimes V'_7\oplus V_7\oplus V'_7\oplus \CC.$$
The result of \cite{ruzzi} is that $DG$ is the (highly non transverse) intersection of 
$\SS_{14}$ with $\PP D_z$, where $D_z=V_7\otimes V'_7\oplus \CC\subset\Delta$. 

The orthogonal to $D_z$ can be described as follows.
The Clifford multiplication yields a morphism $V_{14}\otimes \Delta\rightarrow \Delta^\vee$. 
The image of $V_{14}\otimes z$ is a subspace $L_z$ of $\Delta^\vee$, of dimension $14$, 
which must be stable under $G_2\times G_2$. In particular it must coincide with the 
orthogonal of $D_z$. We can explicitely determine this subspace by computing a basis:
$$\begin{array}{rclcrcl}
e_1.z & = & e_1+e_{14567}, & & f_1.z & = & e_{237}+e_{23456}, \\
e_2.z & = & e_2+e_{24567}, & & f_2.z & = & -e_{137}-e_{13456}, \\
e_3.z & = & e_3+e_{34567}, & & f_3.z & = & e_{127}+e_{12456}, \\
e_4.z & = & e_4-e_{12347}, & & f_4.z & = & e_{567}-e_{12356}, \\
e_5.z & = & e_5-e_{12357}, & & f_5.z & = & -e_{467}+e_{12346}, \\
e_6.z & = & e_6-e_{12367}, & & f_6.z & = & e_{457}-e_{12345}, \\
e_7.z & = & e_7+e_{1234567}, & & f_7.z & = & -e_{123}-e_{456}.
\end{array}$$

\smallskip
\subsection{Spinorial interpretation}
Let us denote by $\cL$ the very ample line bundle that defines the embedding of the spinor 
variety $\SS_{14}\subset\PP\Delta$. Recall that $\Delta$ is one of the half-spin representations of 
$Spin_{14}$, and its dimension is $64$. The spinor variety $\SS_{14}$ parametrizes one of
the two families of maximal isotropic spaces in $V_{14}$, and the square $\cL^2$ defines the Pl\"ucker embedding 
$$\SS_{14}\hookrightarrow G(7,V_{14})\subset\PP(\wedge^7V_{14}).$$ The tautological bundle on 
$G(7,V_{14})$ restricts to a rank seven vector bundle $\cU$ on $\SS_{14}$, such 
that $\det (\cU)=\cL^{-2}$. Moreover, $\cU\otimes \cL$ is an irreducible homogeneous vector 
bundle, and by the Borel-Weil theorem, 
$$H^0(\SS_{14}, \cL)=\Delta^\vee \qquad \mathrm{and}\qquad 
H^0(\SS_{14}, \cU\otimes \cL)=\Delta.$$
Since $\cU\otimes \cL$ is irreducible and admits non zero sections, it is automatically 
globally generated. So a general section vanishes along a codimension seven subvariety 
of $\SS_{14}$. Note that this zero locus is (locally) constant up to projective isomorphism,
since $Spin_{14}$ acts on $\PP \Delta$ with an open orbit (whose complement is a degree $7$ hypersurface, see \cite{am} for more details).

\begin{prop}
The zero locus of a general section of the vector bundle $\cU\otimes \cL$ on $\SS_{14}$
is projectively isomorphic with $DG$. 
\end{prop}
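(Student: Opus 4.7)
The plan is to identify the zero locus of the section $s_z\in H^0(\SS_{14},\cU\otimes\cL)$ associated to $z\in\Delta$ with the linear section $\SS_{14}\cap\PP D_z$, which by Ruzzi's description recalled above coincides with $DG$.

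The central step is to make the Borel--Weil evaluation $\Delta\to(\cU\otimes\cL)_{[E]}=E\otimes\CC v_E$ explicit at a point $[E]\in\SS_{14}$ with associated pure spinor line $\CC v_E$. The fiber is an irreducible module over the stabilizer of $[E]$, so the evaluation is, up to scalar, uniquely determined by equivariance. Concretely, working with the bundle exact sequence
$$0\to\cU\otimes\cL\to V_{14}\otimes\cL\to\cU^\vee\otimes\cL\to 0$$
induced by Clifford multiplication, and using the adjointness of Clifford, one checks that
$$s_z([E])(w) = \langle z, w \cdot v_E\rangle \qquad (w\in V_{14}/E),$$
viewing $E\simeq (V_{14}/E)^\vee$ via the quadratic form; here $w\cdot v_E\in\Delta^\vee$ is Clifford multiplication and $\langle\cdot,\cdot\rangle$ the natural pairing $\Delta\times\Delta^\vee\to\CC$. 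Equivalently, fixing a complement $F$ to $E$ so that $\Delta\simeq\wedge^{\bullet,\text{even}}F$, this map is the $GL(F)$-equivariant projection $z=z_0+z_2+z_4+z_6\mapsto z_6$ onto the top piece $\wedge^6 F$.

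With this formula in hand, $s_z([E])=0$ iff $\langle z, w\cdot v_E\rangle=0$ for every $w\in V_{14}$ (the case $w\in E$ being automatic since $E\cdot v_E=0$), which by adjointness is equivalent to $\langle w\cdot z, v_E\rangle=0$ for every $w$, i.e.\ to $v_E\in (V_{14}\cdot z)^\perp=L_z^\perp=D_z$. Therefore
$$Z(s_z) = \{[E]\in\SS_{14} : v_E\in D_z\} = \SS_{14}\cap\PP D_z = DG$$
set-theoretically. To upgrade to a projective isomorphism, note that $\cU\otimes\cL$ is globally generated, so for a general section the zero locus has the expected codimension $7$ and, by Bertini for sections of globally generated bundles on the smooth variety $\SS_{14}$, is itself smooth; since $DG$ is smooth irreducible of dimension $14$, the set-theoretic equality forces $Z(s_z)=DG$ as subschemes of $\SS_{14}\subset\PP\Delta$, yielding the required projective isomorphism.

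The main obstacle is the explicit identification of the evaluation map. Tracing through the Borel--Weil isomorphisms for $\cL$ and $\cU\otimes\cL$, the Clifford-induced bundle exact sequence displayed above, and the correct signs and normalizations of the adjointness of Clifford multiplication, is delicate; once this is in hand, however, the orthogonality characterization of $D_z$ established in the preceding subsection immediately pins down the zero locus of $s_z$.
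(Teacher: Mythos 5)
Your proof follows essentially the same route as the paper's: both pin down the Borel--Weil evaluation as $s_z([y])(u)=\langle z,\,u\cdot y\rangle$ via an equivariance/Schur-type argument, then use Clifford adjointness to identify the zero locus with $\SS_{14}\cap\PP D_z$, which is Ruzzi's description of $DG$. Your closing Bertini/dimension-count paragraph is a welcome extra, since the paper leaves the upgrade from set-theoretic equality to scheme-theoretic (projective) isomorphism implicit.
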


\proof Let $z$ be a general element of $\Delta$, and $s_z$ the associated 
section of $\cU\otimes \cL$. Let $y$ be a pure spinor; in other words,  $[y]$ is a point
of $\SS_{14}$. Then $s_z([y])$ is a linear homomorphism from $\cL^\vee_{[y]}=\CC y$ to
$\cU_{[y]}$. The latter is the subspace of $V_{14}$ characterized as 
$$\cU_{[y]} = \{v\in V_{14}, \; v.y=0\},$$
where $v.y\in\Delta^\vee$ denotes the Clifford product of the vector $v$ by the spinor $y$
(recall that the fact that $\cU_{[y]}$ is maximal isotropic is equivalent to $y$ being a 
pure spinor \cite{chevalley}). 
We claim that $s_z([y])$ is defined by the following formula:
$$s_z([y])(u)=\langle z, u.y\rangle, \qquad u\in V_{14}.$$
Note that the right hand side is a linear form in $u\in V_{14}$ that certainly 
vanishes on $\cU_{[y]}$. Since it is maximal isotropic, $\cU_{[y]}\simeq \cU_{[y]}^\perp$.
So the right hand side really defines an element of $\cU_{[y]}$, depending linearly 
on $y\in [y]$, as required. 

We have therefore defined a non trivial equivariant morphism from $\Delta$ to 
$H^0(\SS_{14}, \cU\otimes \cL)$. 
By the Schur Lemma, it must be an isomorphism, and the same one up to scalar as the one 
provided by the Borel-Weil theorem. 

So the zero-locus of $s_z$ is the set of points $[y]\in \SS_{14}$ such that 
$$\langle z, u.y\rangle=\langle u.z, y\rangle=0 \quad \forall u\in V_{14}.$$
In other words, set theoretically it is the intersection of $\SS_{14}$ with
the orthogonal to the fourteen dimension subspace $V_{14}.z\subset\Delta^\vee$. 
This is exactly Ruzzi's description, and we are done. \qed

\begin{coro}
$DG$ is a prime Fano manifold of dimension $14$ and index $7$.
\end{coro}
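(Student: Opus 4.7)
The plan is to combine the adjunction formula in $\SS_{14}$ with the Picard-number-one input from Ruzzi's classification. The dimension is immediate from Proposition~2: since $\SS_{14}$ has dimension $21$ and $\cU\otimes\cL$ has rank $7$, the zero locus of a general section has dimension $21-7=14$.

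For the canonical class I would apply adjunction,
$K_{DG}=\bigl(K_{\SS_{14}}\otimes\det(\cU\otimes\cL)\bigr)|_{DG}.$
The two numerical inputs are $K_{\SS_{14}}=\cL^{-12}$ (the spinor variety $\SS_{14}$ has Fano index $12$, as is standard for $\SS_{2n}$, whose index is $2n-2$) and $\det\cU=\cL^{-2}$ (already recorded in the excerpt). These combine to give $\det(\cU\otimes\cL)=\cL^{-2}\otimes\cL^{7}=\cL^{5}$. Hence $K_{DG}=\cL^{-7}|_{DG}$, so $DG$ is Fano with $-K_{DG}=7\,\cL|_{DG}$, and its Fano index is a positive multiple of $7$.

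What remains is to show that $DG$ is prime and that the index equals exactly $7$. Picard number one is part of Ruzzi's classification recalled in Section~2.1, so $\mathrm{Pic}(DG)=\ZZ\cdot H$ for some ample generator $H$ with $\cL|_{DG}=dH$, giving index $7d$. The delicate point is ruling out $d\geq 2$. For this I would invoke the Kobayashi--Ochiai theorem: a Fano $n$-fold with index at least $n$ is either $\PP^n$ or a smooth quadric. Since $DG$ has three $(G_2\times G_2)$-orbits by Ruzzi (the open orbit isomorphic to $G_2$, a codimension-one orbit, and the closed orbit $\QQ_5\times\QQ_5$), it is neither $\PP^{14}$ nor $\QQ^{14}$, so its index is at most $13$. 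Divisibility by $7$ then forces the index to be exactly $7$ and $d=1$, which establishes both primality and the index. An alternative route to primality, avoiding Kobayashi--Ochiai, would be a Sommese-type Lefschetz theorem for zero loci of ample vector bundles, giving directly $\mathrm{Pic}(\SS_{14})\cong\mathrm{Pic}(DG)$; but this would require verifying ampleness of $\cU\otimes\cL$, which is less immediate than the orbit-structure argument above.
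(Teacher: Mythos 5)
Your argument is correct and follows essentially the same route as the paper: adjunction gives $K_{DG}=\cL^{-7}|_{DG}$ from the index $12$ of $\SS_{14}$ and $\det(\cU\otimes\cL)=\cL^5$, and Kobayashi--Ochiai (plus the fact that $DG$ is not a quadric) rules out divisibility of $\cL|_{DG}$. The only difference is presentational: you make explicit the appeal to Ruzzi's Picard-number-one result and to the orbit structure to exclude $\PP^{14}$ and $\QQ^{14}$, where the paper leaves these points implicit.
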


\proof %The spinor variety
$\SS_{14}$ has index $12$, while $\det(\cU\otimes \cL)=
\det(\cU)\otimes \cL^7=\cL^5$. Of course the restriction of $\cL$ cannot be divisible 
since by Kobayashi-Ochiai it cannot be bigger that $15$, and $DG$ would be a quadric if it 
was equal to $14$.\qed 

\medskip Recall that the Chow ring of $\SS_{14}$ has an integral basis of Schubert 
classes $\tau_\mu$ indexed by strict partitions $\mu = (\mu_1>\cdots >\mu_m>0)$, 
with $\mu_1\le 6$. In particular $\tau_1$ is the hyperplane class, and the Pieri 
formula states that 
$$\tau_\mu\tau_1 = \sum_{\nu}\tau_\nu,$$
where the sum is over all strict partitions $\nu$ obtained by adding one to some 
part of $\mu$ (or adding a part equal to one). There is a more general version 
for the product of a Schubert class by a special class $\tau_k$, with multiplicities 
given by certain powers of two \cite{hillerboe}. A consequence is that the Chow 
ring of  $\SS_{14}$ is generated, over the rationals, by the three special classes 
$\tau_1, \tau_3, \tau_5$.

\begin{coro}
The fundamental class of $DG$ in the Chow ring of $\SS_{14}$ is 
$$[DG]=c_7(\cU\otimes\cL)= \tau_{61}+\tau_{52}+\tau_{43}+\tau_{421}=
2\tau_1\tau_3^2+2\tau_1^2\tau_5-6\tau_1^4\tau_3+3\tau_1^7.$$
\end{coro}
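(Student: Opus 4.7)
The first equality follows directly from the preceding proposition: $DG$ is the zero locus of a regular section of the rank-$7$ bundle $\cU\otimes\cL$, hence $[DG]=c_7(\cU\otimes\cL)$ by the standard top-Chern-class formula.

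To convert this into a Schubert class expansion, I would apply the splitting principle, which gives
$$c_7(\cU\otimes\cL)=\sum_{k=0}^{7}c_k(\cU)\,\tau_1^{\,7-k},$$
and then identify each $c_k(\cU)$ in the Schubert basis of $\SS_{14}$. The inputs are the Pl\"ucker embedding $\SS_{14}\hookrightarrow G(7,V_{14})$, which realises $\cU$ as the restriction of the tautological subbundle on the Grassmannian, together with the self-duality $V_{14}/\cU\simeq\cU^\vee$ coming from maximal isotropy; in particular $c_1(\cU)=-2\tau_1$ since $\det\cU=\cL^{-2}$, and the relation $c(\cU)\,c(\cU^\vee)=1$ (together with the Hiller--Boe Pieri rule recalled above) pins down the higher $c_k(\cU)$ as explicit $\ZZ$-combinations of the $\tau_\mu$ with $|\mu|=k$.

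Substituting these expressions into the splitting formula and iterating the Pieri rule $\tau_\mu\tau_1=\sum_\nu\tau_\nu$ expands $c_7(\cU\otimes\cL)$ in the degree-$7$ Schubert basis. The only strict partitions of $7$ with largest part at most $6$ are $(6,1),(5,2),(4,3),(4,2,1)$, so the output is a $\ZZ$-combination of those four classes; a direct computation of the coefficients should show each of them equals $1$, yielding $[DG]=\tau_{61}+\tau_{52}+\tau_{43}+\tau_{421}$. To obtain the polynomial expression, I would then re-expand each of these four Schubert classes in the rational generators $\tau_1,\tau_3,\tau_5$ by applying the Pieri-type formulas for multiplication by the special classes $\tau_3$ and $\tau_5$, whose multiplicities involve the characteristic powers of $2$ from \cite{hillerboe}, and collect terms.

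The main obstacle is the bookkeeping: establishing the correct Schubert expansions of the Chern classes $c_k(\cU)$ on $\SS_{14}$ and carefully tracking the $2$-power multiplicities appearing in the Pieri rule for the generators $\tau_3$ and $\tau_5$. Once those tables are laid down, every remaining manipulation is a mechanical expansion.
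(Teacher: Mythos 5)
Your overall strategy coincides with the paper's: both reduce the computation to the splitting formula $c_7(\cU\otimes\cL)=\sum_{k=0}^{7}c_k(\cU)\,\tau_1^{7-k}$ followed by repeated applications of the Pieri rule. There is, however, a genuine gap in your derivation of the Chern classes $c_k(\cU)$. The relation $c(\cU)\,c(\cU^\vee)=1$, combined with $c_i(\cU^\vee)=(-1)^i c_i(\cU)$, yields non-trivial constraints only in even degrees: the degree-$2j$ component determines $c_{2j}(\cU)$ in terms of lower odd Chern classes, while the degree-$(2j+1)$ component vanishes identically. So $c_1(\cU)=-2\tau_1$, self-duality, and the Pieri rule do \emph{not} pin down $c_3(\cU)$, $c_5(\cU)$, $c_7(\cU)$; you would be stuck precisely where you claim to be done.

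The missing ingredient is the explicit Schubert expansion $c_i(\cU^\vee)=2\tau_i$ for $1\le i\le 6$ and $c_7(\cU^\vee)=0$, i.e. $c_i(\cU)=(-1)^i\,2\tau_i$ and $c_7(\cU)=0$. The vanishing of the top Chern class can be seen directly: a generic $v\in V_{14}$ is non-isotropic, hence lies in no maximal isotropic subspace, so the corresponding section of $\cU^\vee$ is nowhere-vanishing. The identities $c_i(\cU^\vee)=2\tau_i$ for $i<7$ come from pulling back the special Schubert classes of $G(7,V_{14})$ along $\SS_{14}\hookrightarrow G(7,V_{14})$, and this is an independent input, not a consequence of the relations you list. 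Once these Chern classes are in hand, substituting into the splitting formula and iterating the $\tau_1$-Pieri rule does give $\tau_{61}+\tau_{52}+\tau_{43}+\tau_{421}$; the remainder of your outline (the list of strict partitions of $7$ with parts at most $6$, and the re-expansion in $\tau_1,\tau_3,\tau_5$) is fine.
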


\proof  By the Thom-Porteous formula $[DG]=c_7(\cU\otimes\cL)$. Since 
$$c_7(\cU\otimes\cL)=\sum_{i=0}^7c_i(\cU)c_1(\cL)^{7-i},$$
a repeated application of the Pieri formula yields the result. \qed 

\medskip Another direct application is to rigidity questions, which attracted strong 
interests for homogeneous spaces and their subvarieties \cite{hm, fuman}. 

\begin{prop}
$DG$ is infinitesimally rigid. 
\end{prop}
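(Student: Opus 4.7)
The plan is to use the realization of $DG$ as the zero locus of a general section of $\cE=\cU\otimes\cL$ on $\SS_{14}$, combined with the identification $T_{\SS_{14}}\simeq\wedge^2\cU^\vee$. From the normal bundle exact sequence
$$0\to T_{DG}\to T_{\SS_{14}}|_{DG}\to\cE|_{DG}\to 0,$$
the vanishing $H^1(DG,T_{DG})=0$ reduces to two statements: (a) $H^1(DG, T_{\SS_{14}}|_{DG})=0$, and (b) the map $H^0(DG, T_{\SS_{14}}|_{DG})\to H^0(DG, \cE|_{DG})$ is surjective.

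To compute both groups on $DG$, I would twist the Koszul resolution
$$0\to\wedge^7\cE^\vee\to\cdots\to\cE^\vee\to\cO_{\SS_{14}}\to\cO_{DG}\to 0$$
by $T_{\SS_{14}}$ and by $\cE$ respectively, and run the associated hypercohomology spectral sequences. Using $\cE^\vee=\cU^\vee\otimes\cL^{-1}$, each term $\wedge^k\cE^\vee\otimes\cF$ decomposes, after applying the $GL_7$-plethysm for $\wedge^k\cU^\vee\otimes\wedge^2\cU^\vee$ (respectively $\wedge^k\cU^\vee\otimes\cU$) and twisting by $\cL^{-k}$, into a direct sum of irreducible $Spin_{14}$-homogeneous bundles on $\SS_{14}=Spin_{14}/P_7$; Borel--Weil--Bott then determines each summand's cohomology.

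For (b), the $\cE$-twisted computation identifies $H^0(DG, \cE|_{DG})$ with the quotient $\Delta/\CC z$ of $H^0(\SS_{14},\cE)=\Delta$, the defining section $z$ being zero on $DG$. The Lie algebra $\mathfrak{spin}_{14}=H^0(\SS_{14},T_{\SS_{14}})$ lifts to $H^0(DG, T_{\SS_{14}}|_{DG})$, and the composition $\mathfrak{spin}_{14}\to\Delta/\CC z$ is the infinitesimal action $X\mapsto X\cdot z\bmod\CC z$. By Proposition 1 the stabilizer of $z$ has dimension $28$, so $\mathfrak{spin}_{14}\cdot z$ is $63$-dimensional; moreover $z\notin\mathfrak{spin}_{14}\cdot z$, for any $X_0$ with $X_0\cdot z=z$ would normalize $\mathfrak{g}_2\times\mathfrak{g}_2=\mathrm{stab}(z)$, and the branching
$$\mathfrak{so}_{14}=(\mathfrak{g}_2\boxtimes\CC)\oplus(\CC\boxtimes\mathfrak{g}_2)\oplus(V_7\boxtimes\CC)\oplus(\CC\boxtimes V'_7)\oplus(V_7\boxtimes V'_7)$$
shows that the centralizer of $\mathfrak{g}_2\times\mathfrak{g}_2$ is trivial, forcing $X_0\in\mathfrak{g}_2\times\mathfrak{g}_2$ and hence $z=X_0\cdot z=0$. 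Thus $\mathfrak{spin}_{14}\cdot z+\CC z=\Delta$ and the composition surjects onto $\Delta/\CC z$, establishing (b).

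The main obstacle is (a): the Koszul complex has eight terms, the plethysm $\wedge^k\cU^\vee\otimes\wedge^2\cU^\vee$ yields several irreducible summands at each level, and for each resulting homogeneous bundle one must extend the $GL_7$-highest weight to a weight for $P_7\subset Spin_{14}$ and run Bott's dot-action of the $D_7$-Weyl group to verify the required cohomology vanishings. Since $\SS_{14}$ is cominuscule, the computation is mechanical but involves a long list of weights.
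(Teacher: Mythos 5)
Your strategy is the paper's: since $DG$ is Fano it suffices to prove $H^1(T_{DG})=0$; the normal bundle sequence for $DG\subset\SS_{14}$ reduces this to (a) $H^1(DG,T_{\SS_{14}}|_{DG})=0$ and (b) surjectivity of $H^0(DG,T_{\SS_{14}}|_{DG})\to H^0(DG,\cE|_{DG})$; both are to be attacked via the Koszul complex of $\cE=\cU\otimes\cL$ and Borel--Weil--Bott. Your treatment of (b) is also essentially the paper's, with an unnecessary detour: the surjectivity of $\mathfrak{spin}_{14}\to\Delta/\CC z$, $X\mapsto Xz\bmod\CC z$, is nothing but the statement that $Spin_{14}\cdot[z]$ is open in $\PP\Delta$, i.e. the Sato--Kimura prehomogeneity already recorded in the paper. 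Your dimension count and the centralizer computation from the branching of $\mathfrak{so}_{14}$ under $\mathfrak{g}_2\times\mathfrak{g}_2$ are correct, but they re-derive a fact you could simply quote.

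The gap is in (a), which is the substance of the proof and which you explicitly leave undone. You correctly describe the method --- decompose $\wedge^2\cU^\vee\otimes\wedge^k\cU^\vee$ into its irreducible summands, twist by $\cL^{-k}$, apply Bott --- but declaring it ``mechanical'' is not a substitute for carrying it out. Whether every required group vanishes is precisely what must be checked, and there is no a priori reason the $\rho$-shifted weight of some summand could not become dominant after a dot-reflection of length $k$, producing a nonzero $H^{k+1}$. The paper does the check: for $1\le k\le 7$ and for each of the at most three summands, of highest weights $\lambda_k=\epsilon_1+\cdots+\epsilon_{k+2}$, $\mu_k=2\epsilon_1+\epsilon_2+\cdots+\epsilon_{k+1}$, $\nu_k=2\epsilon_1+2\epsilon_2+\epsilon_3+\cdots+\epsilon_k$, it exhibits an explicit root $\epsilon_i+\epsilon_j$ making the shifted twisted weight singular, hence the bundle acyclic. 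You likewise assert, without the supporting Koszul/BWB computation, that the restriction $H^0(\SS_{14},\cU\otimes\cL)\to H^0(DG,\cU\otimes\cL|_{DG})$ is surjective with kernel $\CC z$. Until these vanishings are actually verified, the argument is not complete.
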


\proof Since $DG$ is Fano, its deformations are non obstructed and we just need
to prove that $H^1(T_{DG})=0$. Then the usual computations with the Koszul complex
and the Borel-Weil-Bott theorem yield the result. Indeed, the Koszul complex 
takes the form 
$$0\lra \cL^{-5}\lra \cU\otimes\cL^{-4}\lra \cdots \lra  \cU^\vee\otimes\cL^{-1}
\lra \cO_{\SS_{14}}\lra \cO_{DG}\lra 0.$$

\noindent {\it First step}. We first prove that $H^1(T\SS_{14|DG})=0$ by tensoring the Koszul complex 
with $T\SS_{14}=\wedge^2\cU^\vee$,  and then by checking that for any integer $k$, with $0\le k\le 7$, 
the cohomology group
$$H^{k+1}(\SS_{14},\wedge^2\cU^\vee\otimes\wedge^k\cU^\vee\otimes\cL^{-k})=0.$$
For $k=0$ we just get the irreducible bundle $\wedge^2\cU^\vee$, which is globally
generated and has no higher cohomology by the Bott-Borel-Weil theorem. For $k>0$, 
the tensor product $ \wedge^2\cU^\vee\otimes\wedge^k\cU^\vee$ is the direct sum of 
at most three irreducible homogeneous bundles, of respective weights 
$\lambda_k=\epsilon_1+\cdots +\epsilon_{k+2}$ (for $k\le 5$), $\mu_k=2\epsilon_1+\epsilon_2+\cdots +
\epsilon_{k+1}$ (for $1\le k\le 6$) and  $\nu_k=2\epsilon_1+2\epsilon_2+\epsilon_3+\cdots +
\epsilon_{k}$ (for $k\ge 2$). Here we made the usual choice of positive roots $\epsilon_i\pm
\epsilon_j$ for $1\le i<j\le 7$, where $(\epsilon_1, \ldots , \epsilon_7)$ is an orthonormal basis. Following the Bott-Borel-Weil theorem, these bundles twisted by 
$\cL^{-k}$ are acyclic if we can find roots $\varphi_k, \chi_k, \psi_k$ such that 
$$\langle \lambda_k-k\omega_7+\rho , \varphi_k\rangle = \langle \mu_k-k\omega_7+\rho , \chi_k\rangle = 
\langle \nu_k-k\omega_7+\rho , \psi_k\rangle =0,$$
where $\rho$ denotes the sum of the fundamental weights, and $\omega_7=\frac{1}{2}(\epsilon _1+\cdots +\epsilon_7)$. 
We will look for a root of the form $\varphi_k=\epsilon_i+\epsilon_j$, with $1\le i<j\le 7$, so that we always have $\langle \omega_6 , \varphi_k\rangle = 1$.  
Then the vanishing condition becomes $i+j+k=14+\delta$, with $\delta=2$ for $j\le k+2$, $\delta=1$ for $i\le k+2<j$, 
and $\delta=0$ for $k+2<i$. 
Solutions do exist for any $k=1,\ldots, 5$: take respectively $(i,j)=(6,7), (5,7), (5,7), (4,7), (5,6)$.  
Similarly we can choose the root $ \chi_k$, for $k=1,\ldots ,6$, to be again of the form $\epsilon_i+\epsilon_j$ with $(i,j)=(6,7), (5,7), (5,6), (4,7), (3,7), (3,7)$. 
Finally for the root $ \psi_k$ we can choose $\epsilon_i+\epsilon_j$ with $(i,j)=(5,7), (5,6), (4,7), (3,7), (4,6), (3,6)$ for $k=2,\ldots ,7$.

\smallskip
\noindent {\it Second step}. Then we need to compute $H^0(\cU\otimes\cL_{|DG})$. Using the 
same techniques as in the previous step, we check that the restriction morphism 
$$H^0(\cU\otimes\cL)\lra H^0(\cU\otimes\cL_{|DG})$$ is surjective, with kernel generated
by the section that defines $DG$. In other words, $H^0(\cU\otimes\cL_{|DG})\simeq \Delta/\CC z$.

\smallskip
\noindent {\it Third step}. We conclude the proof by checking that the morphism 
$$H^0(T\SS_{14|DG})\lra H^0(\cU\otimes\cL_{|DG})$$ is surjective.
For this we simply observe that it factorizes the morphism from $H^0(T\SS_{14})\simeq 
\mathfrak{spin}_{14}$
to $\Delta/\CC z$ given by $X\mapsto Xz$ mod $\CC z$. Finally, the surjectivity of the latter
morphism is equivalent to the fact the orbit of $[z]$ is open in $\PP (\Delta)$. 
\qed 

\medskip As we already mentionned in the introduction, 
this implies that all the smooth projective symmetric
varieties of Picard number one are infinitesimally rigid (see \cite{pk}). 

\medskip\noindent {\it Question}. Is $DG$ globally rigid? There are very nice examples of 
linear sections  (of codimension two and three) of the ten dimensional spinor variety 
$\SS_{10}$, which are defined by the generic point of a representation with an open 
orbit, and turn our for this reason to be locally rigid. However, they are not globally rigid
because the generic points of some smaller orbits still define smooth sections, but of 
a different type \cite{kuzspin10,fuman}. In our case,  what does happen if we replace 
the general point $z$  of $\Delta$ by a general point of its invariant octic divisor? 
Since this divisor is the dual to the spinor variety in the dual representation, the zero-locus
of a section defined by such a point should contain a special $\PP^6$; is it its
singular locus? An explicit representative is 
$$z_1 =  1+e_{1237}+e_{1587}+e_{2467}+e_{123456}.$$
In the case of the Cayley Grassmannian $CG$, general sections from the exceptional divisor 
define a $\PP^3$ which is singular inside the zero-locus, so there is no immediate
obstruction to global rigidity. Up to our knowledge the question of the global rigidity 
of $CG$ remains open.

\section{Octonionic interpretations}
Consider the real algebra $\CC\otimes_{\RR}\OO_\RR$, with the obvious product. This is 
called the algebra of complex octonions, or bioctonions. Of course
it is no longer a division algebra, but it is still what is called a  {\it structurable algebra} \cite{allison}. 
We will consider this algebra with complex coefficients: in other words, we complexify once more. 

\begin{prop}
The double Cayley Grassmannian $DG$ parametrizes the eight-dimensional 
isotropic subalgebras of the complexified bioctonions.
\end{prop}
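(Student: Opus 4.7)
The plan is to identify the eight-dimensional isotropic unital subalgebras of the complexified bioctonions with the zero locus of the section $s_z$ of $\cU\otimes\cL$ on $\SS_{14}$ from Proposition 2, by reducing subalgebras to seven-dimensional isotropic subspaces of $V_{14}$ with an algebraic closure condition, then matching that condition to $s_z=0$.

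First, I would make the algebra explicit. The complexification of $\CC\otimes_\RR\OO_\RR$ is naturally identified with $\OO\oplus\OO$, with componentwise product, unit $(1,1)$, and quadratic form $Q(x,y)=N(x)-N(y)$, a sixteen-dimensional split quadratic space. Its imaginary part is $V_7\oplus V'_7\simeq V_{14}$, exactly the decomposition of Proposition 1. For an eight-dimensional isotropic unital subalgebra $A$, the subspace $W=A\cap V_{14}$ is seven-dimensional, and the isotropy of $A$ is equivalent to that of $W$ in $V_{14}$, because under this assumption the real parts of products $w_1w_2$ with $w_1,w_2\in W$ automatically lie in $\CC\cdot 1$. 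The remaining constraint is that the imaginary part of $w_1w_2$, computed by the bilinear product $\mu: V_{14}\times V_{14}\ra V_{14}$ coming from the cross products on $V_7$ and $V'_7$, must lie in $W$. Thus eight-dimensional isotropic unital subalgebras correspond bijectively to maximal isotropic $W\subset V_{14}$ with $\mu(W,W)\subset W$.

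Second, a seven-dimensional isotropic subspace of $V_{14}$ is maximal, so $W$ defines a point of $OG(7,V_{14})$, which has two connected components; the $G_2\times G_2$-orbit of graphs of algebra automorphisms lies in the component identified with $\SS_{14}$, which fixes the choice. The key step is to identify the closure condition $\mu(W,W)\subset W$ with $s_z([y])=0$ when $W=\cU_{[y]}$, for $z$ the $G_2\times G_2$-invariant spinor of Proposition 1. Because the stabilizer of $z$ is exactly $G_2\times G_2$, the spinor $z$ encodes the splitting $V_{14}=V_7\oplus V'_7$ together with the two cross products, hence encodes $\mu$. Using $s_z([y])(u)=\langle z,u.y\rangle$, the adjointness $\langle z,u.y\rangle=\langle u.z,y\rangle$, and the defining relation $W.y=0$, the vanishing of $s_z([y])$ should translate directly into $\mu(W,W)\subset W$.

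The main obstacle is precisely this last identification: converting the Clifford-theoretic vanishing $z\perp V_{14}.y$ into the bioctonionic identity $\mu(W,W)\subset W$ requires the explicit dictionary between Clifford multiplication in $\mathrm{Cl}(V_{14})$ and octonionic multiplication. The factorization $z=\delta\otimes\delta'$ and the identification $\Delta\simeq\Delta_8\otimes\Delta'_8$ from Proposition 1 reduce this to the classical single-octonion case, in which the identification of the spin representation $\Delta_8$ with $\OO$ via the Cayley spinor expresses Clifford action on $\Delta_8$ in terms of octonion multiplication. Once this dictionary is set up, the resulting $G_2\times G_2$-equivariant morphism from $DG$ to the variety of isotropic subalgebras is injective, since distinct pure spinors yield distinct $\cU_{[y]}$ and hence distinct subalgebras, and bijective onto the target, since its image contains the open orbit of graphs of automorphisms and therefore, by $G_2\times G_2$-equivariance and dimension count, all isotropic unital subalgebras.
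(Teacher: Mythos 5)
Your first paragraph correctly sets up the identification with $\OO\oplus\OO$ and correctly reduces the subalgebra condition to $\mu(W,W)\subset W$ for the imaginary part $W=A\cap V_{14}$ — this much matches the paper's framing. But after that the two arguments diverge substantially, and your route has a genuine gap exactly where you flag "the main obstacle."

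You propose to prove the proposition by identifying the Clifford-theoretic condition $s_z([y])=0$ (equivalently $[y]\in\PP D_z$) with the algebraic closure condition $\mu(W,W)\subset W$ via an explicit dictionary between Clifford multiplication and bioctonion multiplication, reducing via $z=\delta\otimes\delta'$ to triality on each $\Delta_8$. You never carry this out, and it is not a routine verification: one has to produce the triality isomorphism $\Delta_8\simeq\OO$ compatibly with the chosen Cayley spinor $\delta$, transport the Clifford action across it, and match the resulting bilinear structure with the octonionic cross product, then assemble the two factors. This is real work that the paper does not do, because it does not need to.

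The paper instead exploits a fact you half-see but do not use: by Ruzzi's classification, $DG$ is the unique Picard-number-one smooth equivariant completion of the symmetric space $G_2=(G_2\times G_2)/G_2$, so $DG$ is literally the closure in $\SS_{14}$ of the image of $g\mapsto L_g=\Gamma_g\cap V_{14}$ for $g\in G_2$. Over this open orbit, $\CC(1,1)\oplus L_g=\Gamma_g$ is tautologically an isotropic subalgebra (the graph of an algebra automorphism). The condition "$\CC(1,1)\oplus\cU_{[y]}$ is a subalgebra" is closed in $[y]$, so it propagates to the entire closure $DG$. No Clifford-to-octonion dictionary is needed; no reinterpretation of $s_z$ is needed. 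You actually mention the orbit of graphs of automorphisms, but only at the very end and only to argue surjectivity after the dictionary is in place — you miss that this orbit plus closedness already does the heavy lifting and short-circuits the entire dictionary step. Your last paragraph also frames the conclusion as injectivity/surjectivity of a map between two a priori different sets, which is both unnecessary (once the conditions are shown equivalent it is an equality of subsets of $\SS_{14}$) and not quite honest about the component issue you raise in your second paragraph: you do not verify that \emph{all} 7-planes $W$ with $\mu(W,W)\subset W$ lie in the chosen component of $OG(7,V_{14})$, which your bijectivity claim silently assumes.

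So: the overall direction is sensible, the $\OO\oplus\OO$ reduction and the $\mu(W,W)\subset W$ reformulation are correct, but the centerpiece of your argument is left as an unproved (and avoidable) lemma, while the closure-of-the-$G_2$-orbit observation — which you already have in hand — would have given the result essentially for free.
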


The main point is that complexifying the complex numbers, we just get the algebra
$\CC\oplus\CC$. Indeed, if we denote by $i$ and $I$ the roots of $-1$ in our two
copies of $\CC$, then $E=(1+iI)/2$ and $F=(1-iI)/2$ are such that $E+F=1$, $EF=FE=0$,  
and $E^2=E$ and $F^2=F$. Hence an isomorphism
$$\CC\otimes_{\RR}\CC\otimes_{\RR}\OO_\RR\simeq \OO\oplus\OO.$$
An eight dimensional subspace of 
$\OO\oplus \OO$, which is transverse to this decomposition, can be written 
as the graph $\Gamma_g$ of some $g\in GL(\OO)$. Moreover, it contains the unit element 
if and only if $g(1)=1$. And it is a subalgebra if and only if $g$ belongs to $G_2$. 
It is then generated by the unit element, and its intersection $L_g$ with $V_{14}=Im\OO\oplus 
Im\OO$. Note that $\Gamma_g$ (respectively $L_g$) is isotropic with respect
to the difference of the octonionic norms on the two copies of $\OO$ (respectively 
$Im\OO$). This yields an 
embedding of $G_2$ inside $Spin_{14}$, whose closure is exactly $DG$. 

So $DG$ parametrizes a certain family of subspaces of the bioctonions. 
These spaces must be isotropic subalgebras, since this condition is closed. 
So let us consider such a subalgebra $A$, and suppose it defines a point 
of $DG$, not on the open orbit. Let $K,K'$ denote the kernels of the projections
to the two copies of $\OO$. They must be positive dimensional subspaces 
of $Im\OO$, totally isotropic, and such that $KK\subset K$ and $K'K'\subset K'$. 
In particular $\CC 1+K$ and $\CC 1+K'$ are subalgebras of $\OO$. Let $k=\dim K$
and $k'=\dim K'$. These are invariants of the $Spin_{14}$-action, and since this group
has only three orbits on $DG$, there are at most two possibilities for the pair
$(k,k')$, apart from the generic case $(k,k')=(0,0)$. 

 \medskip\noindent {\bf First case:} $(k,k')=(3,3)$. Then $\CC 1+K$ and $\CC 1+K'$ 
are four dimensional subalgebras of $\OO$. By \cite[Proposition 2.7]{cayley}, 
the isotropic four dimensional subalgebras of $\OO$ are parametrized by the 
quadric $\QQ_5=G_2/P_1$. Explicitly, if $\ell$ is an isotropic line in $Im\OO$, 
then $K_\ell=\ell\OO\cap Im\OO$ is such a subalgebra, and they are all of this 
type. 

When $K$ and $K'$ are given, then
$K\oplus K'$ is isotropic of dimension six, so it is contained in exactly two 
maximal isotropic subspaces of $V_{14}$, one in each family. In particular there
is exactly one in $\SS_{14}$. This defines an 
embedding of $\QQ_5\times\QQ_5$ inside $\SS_{14}$. Since this is the unique 
$G_2\times G_2$-equivariant embedding of $\QQ_5\times\QQ_5$ in $\PP\Delta$, it must factor 
through $DG$.

\medskip\noindent {\bf Second case:} $(k,k')=(2,2)$.
We will show how to construct examples of this type. Since we know there is only one 
orbit which is neither closed nor open, this will necessarily provide us with representatives
of this intermediate orbit $\cO_1$. We start with two null-planes $N$ and $N'$. Recall 
that $\CC 1\oplus N^\perp$ is a six dimensional subalgebra of $\OO$, a copy of 
the sextonion subalgebra \cite{sext}. Moreover it contains $H$, a copy of the quaternion algebra
transverse to $N$. (Over the complex numbers, the quaternion algebra is just an
algebra of rank two matrices, and $N$ is isomorphic with its two-dimensional 
simple module.) Let us also choose $H'$ in $\CC 1\oplus N'^\perp$, transverse to 
$N'$. Consider 
$$A =(N,0)\oplus (0,N')\oplus \Delta_h,$$
where $\Delta_h$ is the graph of some morphism $\delta$ from $H$ to $H'$. 
Then $A$ is an isotropic subalgebra of the bioctonions if and only if $\delta$
is an algebra isomorphism. 

We claim that $A$ belongs to $DG$. Because of the $G_2\times G_2$-equivariance,
it is enough to exhibit just one such $A$ that does belong to $DG$. To do this we 
shall start from an explicit null plane in $Im\OO$. Let $u_1, \ldots , u_7$ be an
orthonormal basis of $Im\OO$, whose multiplication rule is encoded in a Fano plane,
as in \cite{cayley}. Then for example, $N=\langle u_1+iu_2, u_4-iu_5\rangle $ is a null-plane. 
It is convenient to reindex this basis by letting $u_1=v_{-1}, u_2=v_{2}, u_3=v_{-3}, 
u_4=v_{1}, u_5=v_{-2}, u_6=v_{3}, u_7=v_{0}$. Then we may suppose that the 
transformation rule between the basis $v_{-3},v_{-2},v_{-1}, v_0, v_1, v_2, v_3$ and 
$e_1, e_2,e_3, f_1, f_2, f_3, e_7-f_7$ is given by 
$$v_k=\frac{1}{\sqrt{2}}(e_k+f_k), \quad v_{-k}=\frac{i}{\sqrt{2}}(e_k-f_k), 
\quad v_0=\frac{i}{\sqrt{2}}(e_7-f_7).$$  
After this change of basis, our null-plane of $V_7$ becomes $N=\langle e_1+e_2,
f_1-f_2\rangle$. Similarly, $N'=\langle e_4+e_5, f_4-f_5\rangle$ is a null-plane 
in $V'_7$. 

\smallskip\noindent {\it Remark}. Note the connection with the {\it null triples} of \cite{bh}. 

\begin{lemma}
The three dimensional projective space $\PP(N\otimes N')$ is contained in $DG$.
Moreover a spinor $x\in N\otimes N'$ is of type $(3,3)$ if its tensor rank is one, 
and type $(2,2)$ if its tensor rank is two.
\end{lemma}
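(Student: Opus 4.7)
The plan is to use the realization $DG = \SS_{14}\cap \PP D_z$ with $D_z = \CC\oplus V_7\otimes V'_7$ coming from the proof of the second Proposition. Since $N\otimes N'\subset V_7\otimes V'_7\subset D_z$, the inclusion $\PP(N\otimes N')\subset\PP D_z$ is automatic, and the first assertion reduces to proving that every non-zero $x\in N\otimes N'$ is a pure spinor. For this, the key observation is that $N\oplus N'\subset V_{14}$ is a four-dimensional totally isotropic subspace, so by Chevalley its Clifford annihilator in $\Delta$ is an eight-dimensional subspace that splits as $S^+\oplus S^-$, where each $\PP S^\pm\simeq \PP^3$ is a linearly embedded spinor variety $\SS_6^\pm$ parametrising, in each of the two families, the maximal isotropics of $V_{14}$ containing $N\oplus N'$. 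The plan is then to identify $N\otimes N'$ with $S^+$, the family that lives inside $\SS_{14}$.

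Three checks do it. First, $(N\oplus N')\cdot(N\otimes N')=0$: working in the factorization $\Delta = \Delta_8\otimes \Delta'_8$, $z=\delta\otimes\delta'$ with $\delta = 1+e_{123}$, and using the Clifford identities $m^2=n^2=0$ and $mn+nm=0$ for $m,n\in N$, a short direct computation with the explicit action on $\delta$ shows $mn\cdot\delta = 0$, so $m\cdot(n\cdot\delta)\otimes(n'\cdot\delta')=0$. Second, the dimensions match ($4=4$). Third, $N\otimes N'$ lies in the $\SS_{14}$-family rather than the opposite one; I verify this on the single rank-one vector $(e_1+e_2)\otimes(e_4+e_5)$, computing $(e_1+e_2)(e_4+e_5)\cdot z$ from the Clifford table of the excerpt to obtain the decomposable bivector $(e_1+e_2)\wedge(e_4+e_5)\in\wedge^2 E\subset\wedge^+ E=\Delta$, manifestly a pure spinor whose associated maximal isotropic is read off immediately.

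For the second assertion, the plan is to compute the maximal isotropic $L_x$ associated to $x\in N\otimes N'$ directly as the Clifford annihilator in $V_{14}=Im\OO\oplus Im\OO$ and then look at the dimensions $(k,k')$ of the kernels of its two projections. In the rank-one case $x=n\otimes n'$, one finds that $L_x$ equals $N\oplus N'$ supplemented by three further vectors, yielding projection kernels of dimensions $(3,3)$ and placing $x$ in the closed orbit $\QQ_5\times\QQ_5$. In the rank-two case the analogous computation, illustrated by writing $x\cdot z = (e_1+e_2)(e_4+e_5)\exp(e_{36})$, yields projection kernels of dimensions $(2,2)$, so $x$ lands in the intermediate orbit. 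The main technical obstacle is the third check of the first part: distinguishing $S^+$ from $S^-$ requires tracking signs in the $\ZZ/2$-graded Clifford action of $V_7\subset V_{14}$ on $\Delta_8\otimes\Delta'_8$, and is cleanest to resolve via the explicit rank-one computation sketched above.
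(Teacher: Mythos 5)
Your strategy of identifying $N\otimes N'$ with the Clifford annihilator of $N\oplus N'$ is a genuinely different (and potentially cleaner) route than the paper's, which instead writes down, for a general $y=t_1y_1+\cdots+t_4y_4$, an explicit maximal isotropic $P_y=\langle N, N', p_3,p_6,p_7\rangle$ annihilating $y$ and then checks parity. However, as written your argument has a dimension error that undermines your Third Check.

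Set $W=N\oplus N'$, a four-dimensional totally isotropic subspace of $V_{14}$. The annihilator of $W$ in the \emph{full} spinor module $\Delta\oplus\Delta^\vee$ is eight-dimensional, but this splits across the two half-spins: the annihilator of a $k$-dimensional isotropic in a single half-spin of $Spin_{2n}$ has dimension $2^{n-k-1}$, which here gives $2^{7-4-1}=4$. So the annihilator of $W$ \emph{in $\Delta$} is already just $4$-dimensional, a single half-spin of $Spin(W^\perp/W)\simeq Spin_6$; there is no $8$-dimensional $S^+\oplus S^-$ inside $\Delta$ to choose from. Under your stated (incorrect) premise, the Third Check would in any case be insufficient: a $4$-dimensional subspace of an $8$-dimensional $S^+\oplus S^-$ that contains one pure spinor need not equal $S^+$ or $S^-$, and its generic element would then fail to be pure — so verifying a single rank-one vector cannot single out the right family.

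Once the dimension is corrected, your argument closes up nicely, and the Third Check evaporates. Your First Check is sound: $(e_1+e_2)(f_1-f_2)\cdot(1+e_{123})=0$ together with $\dim\wedge^2 N=1$ gives $mn\cdot\delta=0$ for $m,n\in N$, and since $V_7$ acts on $\Delta_8\otimes\Delta'_8$ through the first factor, $W\cdot(N\otimes N')=0$. Hence $N\otimes N'$ lies in the $4$-dimensional annihilator of $W$ in $\Delta$ and therefore coincides with it; that annihilator is a half-spin of $Spin_6\simeq SL_4$, whose nonzero vectors are \emph{all} pure. Since also $N\otimes N'\subset V_7\otimes V'_7\subset D_z$ and $N\otimes N'\subset\Delta$ by construction, these pure spinors give points of $\SS_{14}\cap\PP D_z=DG$, with no need to distinguish $\SS_{14}$ from $\SS_{14}^\vee$. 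The one thing the paper's explicit computation of $P_y$ buys that your route does not is a direct handle on the projection kernels $(k,k')$: reading off $p_3,p_6,p_7$ gives the type dichotomy of the second assertion, whereas your proposal leaves that part to a sketched rank-one/rank-two case analysis which you should carry out explicitly.
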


\proof We have the following correspondance between vectors in $N\otimes N'$ 
and in $\Delta$:
$$\begin{array}{rcl}
(e_1+e_2)\otimes (e_4+e_5) & \mapsto & y_1=(e_1+e_2)(e_4+e_5), \\
(e_1+e_2)\otimes (f_4-f_5) & \mapsto & y_2=(e_1+e_2)(e_4+e_5)e_6e_7, \\
(f_1-f_2)\otimes (e_4+e_5) & \mapsto & y_3=(e_1+e_2)(e_4+e_5)e_3e_7, \\
(e_1+e_2)\otimes (e_4+e_5) & \mapsto & y_4=(e_1+e_2)(e_4+e_5)e_3e_6.
\end{array}$$
This allows to check that $N\otimes N'$ is orthogonal 
to $L_z$. So its projectivization will be contained in $DG$ as soon as it 
only consists in pure spinors. Consider $y=t_1y_1+t_2y_2+t_3y_3+t_4y_4$. 
A straightforward computation shows that $y$ is annihilated by 
$$P_y=\langle e_1+e_2, f_1-f_2, e_4+e_5, f_4-f_5, p_3, p_6, p_7 \rangle , $$
where $p_3=t_4e_6+t_3e_7-t_1f_3, p_6=t_4e_3-t_2e_7+t_1f_6, p_7=t_3e_3+t_2e_6+t_1f_7$.
In particular $y$ is the pure spinor associated (up to scalar) to the maximal isotropic space $P_y$. 
Note moreover that the intersection of $P_y$ with $\langle f_1, \ldots ,f_7\rangle$ has dimension 
equal to two plus the corank of a size three skew-symmetric matrix; in particular this dimension is 
always odd, which means that $y$ is a positive pure spinor. In other words, it is a point of $DG$. \qed 

\medskip
Recall that we denoted by $D$ the closure of the codimension one orbit in $DG$. 
Necessarily, $D$ must be the intersection of $DG$ with the hyperplane $\PP(V_7\otimes V'_7)$. 
Moreover, by the previous lemma $D$ contains the union of the projective spaces 
$\PP(N\otimes N')$, for $N$ and $N'$ null-planes in $V_7$ and $V'_7$. Since 
this union is obviously $G_2\times G_2$-invariant, it has to coincide with $D$. 
(This describes $D$ as the image of a projectivized Kempf collapsing). Moreover,
for the very same reason the closed orbit $\cO_4$ must be the union of the 
rank one elements $\PP N\times \PP N'\subset \PP(N\otimes N')$. Since the 
intersection of two different tensor products $N_1\otimes N'_1$ and $N_2\otimes N'_2$
can only contain elements of rank one (or zero), we deduce the following statement.

\begin{prop}
Suppose that $x$ belongs to $\cO_1$. Then there exists a unique null-plane $N_x$
in $V_7$, and a unique null-plane $N'_x$ in $V'_7$, such that $x$ is contained 
in $\PP(N_x\otimes N'_x)$. Moreover, $x$ has full rank in $\PP(N_x\otimes N'_x)$.
\end{prop}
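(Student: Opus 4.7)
The plan is to deduce all three assertions (existence, full rank, and uniqueness) from the structural description of $D$ and $\cO_4$ just established, together with an elementary observation about tensor rank.

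For existence, I would invoke the identity $D=\bigcup_{N,N'}\PP(N\otimes N')$ obtained in the paragraph preceding the proposition, where $N\subset V_7$ and $N'\subset V'_7$ range over null-planes. Since $\cO_1\subset D$, any $x\in\cO_1$ must lie in some $\PP(N_x\otimes N'_x)$. For the full rank claim, I would then invoke the previous lemma: inside $\PP(N\otimes N')$ the rank one tensors form $\PP N\times\PP N'$, which sits inside the closed orbit $\cO_4$, whereas rank two tensors are of type $(2,2)$ and hence belong to $\cO_1$. Since $x\in\cO_1$, it cannot have rank one in $\PP(N_x\otimes N'_x)$, so it must have full rank two.

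The heart of the argument is uniqueness. Here I would view any element of $V_7\otimes V'_7$ as a linear map $(V'_7)^\vee\to V_7$, and use the standard fact that a rank two tensor $x$ has a canonically defined two-dimensional image in $V_7$ and a two-dimensional coimage in $V'_7$. If $x$ also belongs to $\PP(N\otimes N')$ for another pair of null-planes, then both $N_x$ and $N$ contain the image of $x$, and being themselves two-dimensional they must equal it, giving $N=N_x$. A symmetric argument on the other factor yields $N'=N'_x$. There is no real obstacle: the preceding lemma and the description of $D$ take care of existence, and uniqueness reduces to the elementary fact that the row and column spans of a rank two tensor are canonical invariants of the tensor.
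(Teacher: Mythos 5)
Your proposal matches the paper's reasoning. The paper does not give a separate proof environment for this proposition: the argument is contained in the paragraph immediately preceding it, which establishes (i) that $D$ is the union of the $\PP(N\otimes N')$, (ii) that $\cO_4$ is the union of the rank one loci $\PP N\times\PP N'$, and (iii) that two distinct tensor products $N_1\otimes N'_1$ and $N_2\otimes N'_2$ can only meet in elements of rank at most one. Your three steps track (i)--(iii) exactly; the only cosmetic difference is that you justify (iii) via the canonicity of the image and coimage of a rank two tensor, whereas the paper just asserts the intersection property, but that is the same observation unwound. Correct and essentially identical to the paper's argument.
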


Geometrically, this means that $\cO_1$ fibers over a product of adjoint varieties
$X_{ad}(G_2)\times X_{ad}(G_2)$, with fiber the complement of a smooth quadric in $\PP^3$.

\section{Postulation} 
%In the next sections we will use the weights of $Spin_{14}$, we respect to some fixed maximal torus. 
Recall that the vertices of the Dynkin diagram $D_7$ are in bijective 
correspondence with the fundamental weights $\omega_i$, or the fundamental 
representations $V_{\omega_i}$ of $Spin_{14}$, for $1\le i\le 7$. 
We use the following indexation:

\begin{center}
\setlength{\unitlength}{4mm}

\begin{picture}(20,4)(0,-1.5)
\multiput(5,0)(2,0){5}{$\circ$}
\multiput(5.4,.25)(2,0){4}{$\line(1,0){1.7}$}
%\put(9,0){$\bullet$}
\put(13.4,.4){\line(1,1){1.3}} 
\put(13.4,.1){\line(1,-1){1.3}} 
\put(14.55,-1.5){$\circ$}
\put(14.55,1.6){$\circ$}
\put(.5,-.1){$V_{\omega_1}=V_{14}$}
\put(15.6,-1.35){$V_{\omega_6}=\Delta$}\put(15.5,1.55){$V_{\omega_7}=\Delta^\vee$}
\end{picture} 
\end{center}
One way to compute the cohomology groups on $DG$ of $\cL$ and its powers, 
is again to use the Koszul complex 
\begin{equation}%\label{koszul}
0\lra \wedge^7\cE^\vee \lra \cdots \lra \cE^\vee \lra\cO_{\SS_{14}}\lra \cO_{DG}\lra 0,
\end{equation}
where $\cE=\cU\otimes\cL$. For any $k\ge 0$ and $i\ge 0$, the bundle $\wedge^i\cE^\vee\otimes\cL^k=
\wedge^i\cU^{\vee}\otimes\cL^{k-i}$ is irreducible, with highest weight $\theta_i$ given by 
$\theta_i=(k-i)\omega_7+\omega_i$ for $0\le i\le 5$ (and $\omega_0=0$ by convention), while $\theta_6=(k-5)\omega_7+\omega_6$
and $\theta_7=(k-5)\omega_7$. One easily checks that these weights are either dominant or
singular. By the Bott-Borel-Weil theorem this implies that $\cL^k$ has no higher cohomology. Moreover we can compute the 
dimension of its space of global sections as the alternate sum of  modules whose dimensions
are given by the Weyl dimension formula, as follows: 

$${\scriptstyle \dim V_{k\omega_7}=\frac{(k+1)(k+2)(k+3)^2(k+4)^2(k+5)^3(k+6)^3(k+7)^3(k+8)^2(k+9)^2(k+10)(k+11)}{1\times 2\times 3^2 \times 4^2 \times 5^3 \times 6^3 \times 7^3 \times 8^2 \times 9^2 \times 10\times 11}, }$$ 
$${\scriptstyle \dim V_{(k-1)\omega_7+\omega_1}=\frac{k(k+1)(k+2)^2(k+3)^2(k+4)^3(k+5)^2(k+6)^3(k+7)^2(k+8)^2(k+9)(k+10)(k+11)}{3^2 \times 4^2 \times 5^3 \times 6^2 \times 7^2 \times 8^2 \times 9^2 \times 10\times 11\times 12}, }$$ 
$${\scriptstyle \dim V_{(k-2)\omega_7+\omega_2}=\frac{(k-1)k(k+1)^2(k+2)^2(k+3)^2(k+4)^2(k+5)^3(k+6)^2(k+7)^2(k+8)^2(k+9)(k+11)}{2\times 3^2 \times 4^2 \times 5^2 \times 6^2 \times 7^2 \times 8^2 \times 9^2 \times 10^2\times 11}, }$$ 
$${\scriptstyle \dim V_{(k-3)\omega_7+\omega_3}=\frac{(k-2)(k-1)k^2(k+1)(k+2)^2(k+3)^2(k+4)^3(k+5)^3(k+6)^2(k+7)(k+8)(k+9)(k+10)}{2\times 3^2 \times 5^2 \times 6^2 \times 7^2 \times 8^3 \times 9^2 \times 10\times 11\times 12}, }$$ 
$${\scriptstyle \dim V_{(k-4)\omega_7+\omega_4}=\frac{(k-3)(k-2)(k-1)k(k+1)^2(k+2)^3(k+3)^3(k+4)^2(k+5)^2(k+6)(k+7)^2(k+8)(k+9)}{2\times 3\times 4 \times 5^2 \times 6^3 \times 7^2 \times 8^2 \times 9^2 \times 10\times 11\times 12}, }$$ 
$${\scriptstyle \dim V_{(k-5)\omega_7+\omega_5}=\frac{(k-4)(k-2)(k-1)^2k^2(k+1)^2(k+2)^3(k+3)^2(k+4)^2(k+5)^2(k+6)^2(k+7)(k+8)}{2\times 3\times 4 \times 5^2 \times 6^2 \times 7^2 \times 8^2 \times 9^2 \times 10^2\times 11\times 12}, }$$ 
$${\scriptstyle \dim V_{(k-5)\omega_7+\omega_6}=\frac{(k-4)(k-3)(k-2)(k-1)^2k^2(k+1)^3(k+2)^2(k+3)^3(k+4)^2(k+5)^2(k+6)(k+7)}{2\times 3\times 4^2 \times 5^2 \times 6^3 \times 7^2 \times 8 \times 9^2 \times 10^2\times 11\times 12}, }$$ 
$${\scriptstyle \dim V_{(k-5)\omega_7}=\frac{(k-4)(k-3)(k-2)^2(k-1)^2k^3(k+1)^3(k+2)^3(k+3)^2(k+4)^2(k+5)(k+6)}{1\times 2\times 3^2 \times 4^2 \times 5^3 \times 6^3 \times 7^3 \times 8^2 \times 9^2 \times 10\times 11}.}$$

\begin{prop}
For any $k\ge 0$ and $i>0$, $H^i(DG,\cL^k)=0$. Moreover, 
$$h^0(DG,\cL^k)=\frac{(k+1)(k+2)(k+3)^2(k+4)^2(k+5)(k+6)}{2^{10}3^55^27^2 11} P(k),$$
%where 
$P(k)=186k^6+3906k^5
+34441k^4+163184k^3+438545k^2+634858k+388080.$
\end{prop}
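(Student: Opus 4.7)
The strategy is to use the Koszul resolution
$$0\lra \wedge^7\cE^\vee \lra \cdots \lra \cE^\vee \lra\cO_{\SS_{14}}\lra \cO_{DG}\lra 0$$
(where $\cE=\cU\otimes\cL$), twist it by $\cL^k$, and then compute each term of the resulting complex on $\SS_{14}$ via the Bott--Borel--Weil theorem. The author has already identified each twisted summand $\wedge^i\cU^\vee\otimes\cL^{k-i}$ as the irreducible bundle of highest weight $\theta_i$.

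First, I would verify systematically the claim that every $\theta_i+\rho$ is either regular dominant or singular (never "dominant after a nontrivial Weyl reflection"). Since $\omega_7=\tfrac{1}{2}(\epsilon_1+\cdots+\epsilon_7)$ and $\omega_i=\epsilon_1+\cdots+\epsilon_i$ for $i\leq 5$, a short inspection of the pairings of $\theta_i+\rho$ with the simple roots $\epsilon_j-\epsilon_{j+1}$ and $\epsilon_6+\epsilon_7$ shows that each such pairing is either strictly positive or zero, for every $k\geq 0$. Consequently $H^{>0}(\SS_{14},\wedge^i\cU^\vee\otimes\cL^{k-i})=0$ for all $k\geq 0$ and all $0\leq i\leq 7$.

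Second, I would split the twisted Koszul complex into short exact sequences and walk through the corresponding long exact sequences. Since the hypercohomology spectral sequence
$$E_1^{-i,j}=H^j\bigl(\SS_{14},\wedge^i\cE^\vee\otimes\cL^k\bigr)\;\Longrightarrow\; H^{j-i}(DG,\cL^k)$$
is concentrated on the row $j=0$, it degenerates at $E_1$. This immediately gives $H^i(DG,\cL^k)=0$ for $i>0$ and identifies
$$h^0(DG,\cL^k)=\sum_{i=0}^{7}(-1)^i\,\dim H^0\bigl(\SS_{14},\wedge^i\cU^\vee\otimes\cL^{k-i}\bigr),$$
i.e. the alternating sum of the eight Weyl dimensions displayed in the excerpt.

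Third, I would substitute those eight rational expressions and simplify. Factoring out the common prefactor $(k+1)(k+2)(k+3)^2(k+4)^2(k+5)(k+6)/(2^{10}3^55^27^2\cdot 11)$ reduces the identity to a degree-six polynomial equality in $k$, which one checks directly. The main obstacle is precisely this final polynomial identity: the eight summands have denominators as large as $12!$-type products, and putting them over a common denominator by hand is unpleasant — in practice a computer algebra system is the natural tool. A quick sanity check is that for $k=0$ one must recover $h^0(DG,\cO_{DG})=1$, which forces $P(0)=2^{10}3^55^27^2\cdot 11/\bigl(1\cdot 2\cdot 9\cdot 16\cdot 5\cdot 6\bigr)=3353011200/8640=388080$, in agreement with the constant term of $P$.
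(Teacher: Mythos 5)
Your approach matches the paper's exactly: twist the Koszul complex by $\cL^k$, invoke Bott--Borel--Weil to verify that each term $\wedge^i\cU^\vee\otimes\cL^{k-i}$ has no higher cohomology, and then extract $h^0(DG,\cL^k)$ as the alternating sum of Weyl dimensions before simplifying (by machine) to the stated polynomial.

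One technical slip worth flagging: the hypercohomology spectral sequence does \emph{not} degenerate at $E_1$ — the $d_1$ differential is precisely the complex of global sections, which is certainly nonzero. Concentration in the row $j=0$ only forces degeneration at $E_2$. The correct chain of deductions is: the abutment $\mathbb{H}^n$ is supported in degrees $n=j-i\le 0$, so $H^{>0}(DG,\cL^k)=0$; moreover $\mathbb{H}^{<0}=H^{<0}(DG,\cL^k)=0$ forces the complex of global sections to be exact in negative degrees, so its Euler characteristic (the alternating sum you wrote) equals its $H^0$, which is $h^0(DG,\cL^k)$. Your conclusions are nonetheless correct, and the $k=0$ sanity check confirming $P(0)=388080$ is a useful addition not present in the paper.
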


\begin{coro}
The degree of $DG\subset\PP^{49}$ is $4836=2^2\times 3\times 13\times 31$.
\end{coro}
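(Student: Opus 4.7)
The plan is to read the degree of $DG\subset\PP^{49}$ directly off the Hilbert polynomial established in the preceding proposition. Since that proposition asserts $H^i(DG,\cL^k)=0$ for all $i>0$ and $k\geq 0$, the function $k\mapsto h^0(DG,\cL^k)$ already agrees with the Hilbert polynomial of $(DG,\cL)$. Because $\dim DG=14$, the degree of $DG$ in the embedding $\PP(H^0(DG,\cL)^\vee)\simeq\PP^{49}$ is $14!$ times the leading coefficient of this polynomial in $k$ (one checks en passant, by plugging $k=1$ into the formula, that $h^0(DG,\cL)=50$, consistent with the spinorial description $H^0(DG,\cL)=\CC\oplus V_7\otimes V'_7$).

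Carrying this out, the eight-fold product $(k+1)(k+2)(k+3)^2(k+4)^2(k+5)(k+6)$ is monic of degree $8$ and the sextic $P(k)$ has leading coefficient $186$, so the leading coefficient of $h^0(DG,\cL^k)$ is $186/(2^{10}\cdot 3^5\cdot 5^2\cdot 7^2\cdot 11)$. Using the prime factorization $14!=2^{11}\cdot 3^5\cdot 5^2\cdot 7^2\cdot 11\cdot 13$, the denominator cancels almost completely and
$$\deg DG=\frac{14!\cdot 186}{2^{10}\cdot 3^5\cdot 5^2\cdot 7^2\cdot 11}=2\cdot 13\cdot 186=4836=2^2\cdot 3\cdot 13\cdot 31.$$

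There is no real obstacle beyond the arithmetic; the clean cancellation of $14!$ against the Weyl-denominator factors is what forces the answer to be an integer, as it must be. As an independent sanity check one could instead evaluate $\int_{\SS_{14}}[DG]\cdot\tau_1^{14}$ directly, using the Thom--Porteous expression $[DG]=2\tau_1\tau_3^2+2\tau_1^2\tau_5-6\tau_1^4\tau_3+3\tau_1^7$ from an earlier corollary together with repeated application of the Pieri rule on $\SS_{14}$; this computation is more laborious but would recover the same number $4836$ without relying on the full Hilbert polynomial, and thus cross-validates the preceding proposition.
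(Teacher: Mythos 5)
Your proof is correct and is essentially the paper's own (implicit) argument: the degree of $DG$ in its linear span is $14!$ times the leading coefficient of the Hilbert polynomial established in the preceding proposition, and the paper's remark that the degree ``could also have been deduced from the fundamental class'' matches the alternative cross-check you describe at the end. The arithmetic ($14!=2^{11}\cdot 3^5\cdot 5^2\cdot 7^2\cdot 11\cdot 13$, leading coefficient $186/(2^{10}\cdot 3^5\cdot 5^2\cdot 7^2\cdot 11)$, product $2\cdot 13\cdot 186=4836$) and the sanity check $h^0(DG,\cL)=50$ are both right.
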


This could also have been deduced from the fundamental class of $DG$, by applying repeatedly
the product formula by the hyperplane class. 

\smallskip
Since $DG$ is spherical, it is multiplicity free. As in \cite{cayley}, we can obtain the 
$G_2\times G_2$-module structure of $H^0(DG,\cL^k)$ by restricting to the hyperplane divisor $D$.
Using the projecting bundle structure of its resolution, we get 
$$H^0(D,\cL_D^k)=H^0(X_{ad}(G_2)\times X'_{ad}(G_2), Sym^k(N\otimes N')^\vee).$$
By the Cauchy formula, 
$$Sym^k(N\otimes N') = \bigoplus_{i+2j=k}Sym^iN\otimes (\det N)^j\otimes Sym^iN'\otimes (\det N')^j.$$
Since $Sym^iN^\vee$ is irreducible of highest weight $i\omega_1$, and $\det N^\vee$ of weight $\omega_2$, 
the Borel-Weil theorem yields 
$$H^0(D,\cL_D^k)=\bigoplus_{i+2j=k}V_{i\omega_1+j\omega_2}\otimes V'_{i\omega_1+j\omega_2}.$$
We finally get (to be compared with Proposition 3.6 of \cite{cayley}):

\begin{prop}
The equivariant Hilbert series of the double Cayley Grassmannian is
$$H^{G_2\times G_2}_{DG}(t)=(1-t%V_{\omega_0+\omega'_0}
)^{-1}(1-tV_{\omega_1+\omega'_1})^{-1}
(1-t^2V_{\omega_2+\omega'_2})^{-1}.$$
\end{prop}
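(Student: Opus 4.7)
The plan is to propagate the already-displayed decomposition
$$H^0(D, \cL_D^\ell) = \bigoplus_{i+2j=\ell} V_{i\omega_1 + j\omega_2} \otimes V'_{i\omega_1 + j\omega_2}$$
from the boundary divisor up to all of $DG$, following the template of \cite[Proposition 3.6]{cayley}. Since $D$ is the hyperplane section $DG \cap \PP(V_7 \otimes V'_7)$, one has $\cO_{DG}(D) \simeq \cL$, and the short exact sequence
$$0 \lra \cL^{k-1} \lra \cL^k \lra \cL_D^k \lra 0,$$
combined with the vanishing $H^1(DG, \cL^{k-1}) = 0$ from the previous proposition, yields a short exact sequence of $G_2 \times G_2$-modules at the level of global sections.

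Semisimplicity of the category of $G_2 \times G_2$-representations splits this sequence, and induction on $k$, starting from $H^0(DG, \cO_{DG}) = \CC$, gives
$$H^0(DG, \cL^k) \simeq \bigoplus_{\ell = 0}^{k} H^0(D, \cL_D^\ell) \simeq \bigoplus_{\substack{i,j \ge 0 \\ i + 2j \le k}} V_{i\omega_1 + j\omega_2} \otimes V'_{i\omega_1 + j\omega_2}.$$
Multiplying by $t^k$, summing over $k$, and exchanging the order of summation extracts a factor $(1-t)^{-1}$ in front of the double sum $\sum_{i,j \ge 0} t^{i+2j}\, V_{i\omega_1+j\omega_2} \otimes V'_{i\omega_1+j\omega_2}$. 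Finally, the sphericality of $DG$ guarantees that the resulting series is multiplicity-free, so products of irreducibles may be interpreted as Cartan products; the double sum then factors as $(1 - t V_{\omega_1 + \omega'_1})^{-1}(1 - t^2 V_{\omega_2 + \omega'_2})^{-1}$, yielding the announced formula.

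The main step requiring care is the identification $\cL_{|D} \simeq \cO_{\PP(N \boxtimes N')}(1)$ on which the cited decomposition of $H^0(D, \cL_D^\ell)$ depends; this rests on the Kempf collapsing description of $D$ from the previous section and on a matching of the natural polarization $\cL$ with the tautological class on the $\PP^3$-fibers. The other ingredients — Cauchy's formula, Borel-Weil on $X_{ad}(G_2)$, semisimplicity of the representation category, and formal power-series manipulation — are routine, so once that polarization identification is in hand the assembly of the Hilbert series goes through without difficulty.
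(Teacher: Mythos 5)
Your proposal is correct and follows essentially the same route as the paper: restrict along the divisor sequence $0\to\cL^{k-1}\to\cL^k\to\cL^k_D\to 0$ (using the $H^1$-vanishing from the preceding proposition), compute $H^0(D,\cL_D^k)$ through the Kempf resolution $\PP(N\boxtimes N')\to D$ together with Cauchy's formula and Borel--Weil on $X_{ad}(G_2)\times X_{ad}(G_2)$, and then reassemble the geometric series. The paper compresses the induction over the exact sequences into the phrase ``as in \cite{cayley}'' and takes the identification $\cL_D\simeq\cO_{\PP(N\boxtimes N')}(1)$ (together with rationality of the singularities of $D$, so that sections pull back isomorphically to the resolution) for granted; you are right to flag that identification as the one substantive point being used implicitly.
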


\smallskip
Here we use formally the Cartan multiplication of representations, according to the rule
$V_\mu V_\nu = V_{\mu+\nu}$. Moreover we use it for $G_2\times G_2$, so that $V_{\mu+\nu'}$ is the 
tensor product of the representation $V_{\mu}$ of the first copy of $G_2$, by the representation 
$V_{\nu}$ of the second copy.  
\smallskip

\section{The wonderful compactification of $G_2$}
Recall that the Cayley Grassmannian $CG\subset G(4,V_7)$ has a very similar $G_2$-orbits structure:
a closed orbit $\cO_3\simeq \QQ_5$, a codimension one orbit $\cO_1$ whose closure is a hyperplane 
section $H$ of $CG$, and an open orbit $\cO_0\simeq G_2/SL_2\times SL_2$. Moreover, if we blow-up 
$\cO_3\subset CG$, we get the wonderful compactification of the symmetric space $\cO_0$. 
Since we are in rank two, the proper orbit closures of this wonderful compactification $\overline{CG}$ are 
the two divisors $F$ (the proper transform of $H$), $E$ (the exceptional divisor), and 
their transverse intersection $E\cap F$. The two divisors support smooth projective fibrations:
$$E\simeq \PP(Sym^2C)\ra\QQ_5, \qquad F\simeq \PP(Sym^2N)\ra X_{ad}(\fg_2),$$
where $C$ denotes the so-called {\it Cayley bundle} over $\QQ_5$, and $N$ is the null-plane 
bundle over the adjoint variety  $X_{ad}(G_2)$. Both are rank two irreducible homogeneous bundles.
The latter is the restriction of the tautological bundle for the embedding of $X_{ad}(G_2)$ into $G(2,V_7)$. The former is defined by the conditions that $H^0(C)=0$ and $H^0(C(1))=\fg_2$; its 
first Chern class is the hyperplane class \cite{ott}.

Observe that in particular, $E$ and $F$ both contain a conic fibration, preserved by $G_2$, which must therefore
coincide with the closed orbit $E\cap F$. In fact, this closed orbit is nothing else than
the full flag variety of $G_2$. 

We have the following diagram: 
\begin{equation*}\label{blowupCG}
\xymatrix{& \overline{CG} & \\
E \ar[dd]\ar@{^{(}->}[ur] & & F \ar[dd]\ar@{_{(}->}[ul] \\
  & G_2/B \ar@{^{(}->}[ur]\ar@{_{(}->}[ul] \ar[dr] \ar[dl] & \\
  \QQ_5 & & X_{ad}(\fg_2)
% & X \ar[dd]_\sigma \ar[dr]^{\widetilde{\pi}} \\
%Z \ar@{^{(}->}[r] & \Gr(k,V_n) \ar@{-->}[r]^\pi & \Gr(k,V_{n-1}),
}
\end{equation*}

\smallskip
The picture is strickingly similar for the double Cayley Grassmannian. 
Blowing-up the closed orbit $\cO_4\simeq \QQ_5\times \QQ_5$, we get an exceptional
divisor $E$, which is the projectivization of the normal bundle. 

\begin{lemma}
The normal bundle to the closed orbit in $DG$ is $C\otimes C'$. 
\end{lemma}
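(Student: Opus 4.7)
The bundle $N := N_{\cO_4/DG}$ is a rank-$4$ vector bundle on $\cO_4 \simeq \QQ_5 \times \QQ_5$, equivariant for $G_2 \times G_2$. The plan is to identify it as the kernel in the exact sequence induced by the chain $\cO_4 \subset DG \subset \SS_{14}$, and then match it with $C \boxtimes C'$ using equivariance and determinantal constraints.

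By Proposition 2, $N_{DG/\SS_{14}} \simeq (\cU \otimes \cL)|_{DG}$, so the normal bundle sequence reads
\begin{equation*}
0 \lra N \lra N_{\cO_4/\SS_{14}} \lra (\cU \otimes \cL)|_{\cO_4} \lra 0,
\end{equation*}
where the second map is the derivative $ds_z$ of the section cutting out $DG$. First I would make the embedding $\cO_4 \hookrightarrow \SS_{14}$ explicit: a point $(\ell,\ell') \in \QQ_5 \times \QQ_5$ corresponds to the unique maximal isotropic subspace of $V_{14}$ in the correct spinor family containing the six-dimensional null subspace $K_\ell \oplus K_{\ell'}$, where $K_\ell = \ell\OO \cap \mathrm{Im}\OO$ is the rank-$3$ null subalgebra associated with the isotropic line $\ell$. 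This realizes $\cU|_{\cO_4}$ as an extension of a $G_2 \times G_2$-equivariant line bundle by $(K \boxtimes \cO) \oplus (\cO \boxtimes K')$, with $K, K'$ the rank-$3$ tautological bundles on the two factors. Since $T\SS_{14}|_{\cO_4} = \wedge^2 \cU^\vee|_{\cO_4}$, modding out by $T\QQ_5 \oplus T\QQ_5$ produces a concrete description of $N_{\cO_4/\SS_{14}}$. Applying $ds_z$, computed via the formula $s_z([y])(u) = \langle z, u.y\rangle$ and the octonionic factorization $z = \delta \otimes \delta'$ from Proposition 1, one then reads off the kernel.

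As a sanity check, adjunction together with $K_{DG} = \cL^{-7}$ (Corollary 4) and $\cL|_{\cO_4} = \cO(1,1)$ yields
\begin{equation*}
\det N = K_{\cO_4} \otimes (K_{DG})^{-1}|_{\cO_4} = \cO(-5,-5) \otimes \cO(7,7) = \cO(2,2),
\end{equation*}
matching $\det(C \boxtimes C') = (\det C)^{\otimes 2} \boxtimes (\det C')^{\otimes 2} = \cO(2,2)$ since $c_1(C)$ is the hyperplane class. The main obstacle will be distinguishing the kernel from other rank-$4$ equivariant bundles on $\QQ_5 \times \QQ_5$ with the same determinant. For this, I would appeal to the cohomological characterization of the Cayley bundle ($H^0(C) = 0$ and $H^0(C(1)) = \fg_2$): restricting $N$ to a fiber of one projection yields a rank-$2$ equivariant bundle on $\QQ_5$ whose cohomology can be controlled using the $G_2 \times G_2$-decomposition $\Delta \simeq V_7 \otimes V'_7 \oplus V_7 \oplus V'_7 \oplus \CC$ from Section 2.2, forcing each factor to coincide with $C$ up to isomorphism.
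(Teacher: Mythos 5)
Your proposal takes a genuinely different route from the paper. You aim for a direct computation: chase the normal bundle through the chain $\cO_4 \subset DG \subset \SS_{14}$ using $N_{DG/\SS_{14}} \simeq (\cU\otimes\cL)|_{DG}$, describe $\cU|_{\cO_4}$ explicitly via the null subalgebras $K_\ell, K_{\ell'}$, and extract the kernel of $ds_z$. The paper instead gives a short classification argument: a $G_2\times G_2$-equivariant rank-$4$ bundle on $\QQ_5\times\QQ_5$, symmetric under the swap, must be built from $G_2$-homogeneous bundles of rank at most $2$ on each $\QQ_5$; up to twist the only such are $\cO$ and the Cayley bundle $C$ (there being no nontrivial extensions between line bundles on $\QQ_5$); the direct-sum alternative $C(a,b)\oplus C'(b,a)$ has odd determinant and is ruled out, leaving a twist of $C\boxtimes C'$, which the determinant $\cO(2,2)$ pins down as the trivial twist. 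Your adjunction check $\det N = \cO(2,2)$ reproduces exactly the numerical input the paper relies on, so you have correctly identified the constraint; the difference is how one passes from that constraint to the bundle.

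As written, however, your proposal has two gaps. First, the core of your computation ("one then reads off the kernel") is not actually carried out: making $\cU|_{\cO_4}$ and $ds_z$ explicit enough to identify the kernel as $C\boxtimes C'$ is precisely the nontrivial work, and nothing in the outline reduces it to a bookkeeping exercise. Second, the closing sentence misfires: restricting a rank-$4$ bundle $N$ on $\QQ_5\times\QQ_5$ to a fiber $\QQ_5\times\{pt\}$ yields a rank-$4$ bundle, not rank $2$. To see a rank-$2$ object you would first need to know that $N$ is an external tensor product $A\boxtimes A'$ rather than, say, a direct sum $A(\ast)\oplus A'(\ast)$, and that is exactly the dichotomy the paper resolves by the determinant parity argument. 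Without that step, the appeal to the cohomological characterization of $C$ is premature. The paper's equivariance-plus-classification shortcut is really doing something your plan does not yet replace.
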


Moreover the strict transform $F$ of $D$ is the total space of the 
projectivisation of $N\otimes N'$ over $X_{ad}(\fg_2)\times X_{ad}(\fg_2)$. Again 
each of these divisors contains a quadric surface bundle, which must 
coincide with the closed orbit $E\cap F$. In fact this closed 
orbit is nothing else than the product of two copies of the flag variety
of $G_2$. We get the following diagram:
\begin{equation*}\label{blowupDG}
\xymatrix{& \overline{DG} & \\
E \ar[dd]\ar@{^{(}->}[ur] & & F \ar[dd]\ar@{_{(}->}[ul] \\
  & G_2/B\times G_2/B' \ar@{^{(}->}[ur]\ar@{_{(}->}[ul] \ar[dr] \ar[dl] & \\
  \QQ_5\times \QQ_5 & & X_{ad}(\fg_2)\times X_{ad}(\fg_2)
% & X \ar[dd]_\sigma \ar[dr]^{\widetilde{\pi}} \\
%Z \ar@{^{(}->}[r] & \Gr(k,V_n) \ar@{-->}[r]^\pi & \Gr(k,V_{n-1}),
}
\end{equation*}

\medskip 

\proof For a quick check of the Lemma we can argue as follows. The normal bundle $N$ on 
$\QQ_5\times \QQ_5$ we are looking for has rank four, and is by construction 
homogeneous under $G_2\times G_2$, and symmetric with respect to the two
quadrics. In particular it must be constructed from homogeneous bundles 
of rank at most two on the two quadrics. Since there are no non trivial 
extensions between line bundles on $\QQ_5$, this quadric admits only two, up to twists,
$G_2$-homogeneous bundles of rank at most two: the trivial line bundle and the 
Cayley bundle. 

A possibility would be that $N=C(a,b)\oplus C'(b,a)$, where we denote by $C$ and 
$C'$ the two Cayley bundles induced from the two quadrics. But then we would get 
$\det(N)=(2a+2b-1,2a+2b-1)$, while a computation with tangent bundles yields
$\det(N)=(2,2)$. So $N$ must be a twist of  $C\otimes C'$, and since this has
the correct determinant, the twist must be trivial.\qed

\medskip\noindent {\it Remark}. Exactly as in the case of $CG$, there also 
exists another contraction of $\overline{DG}$ to another variety $\hat{DG}$, contracting the
divisor $D$. But the result of this contraction is singular.

\section{Betti numbers} 

In this section we compute the Betti numbers of $DG$. We would like to be able to compute its
cohomology ring. 

\subsection{Torus action}

Let $T$ be a maximal torus of $G_2\times G_2$.

\begin{prop}
The torus $T$ acts on $DG$ with exactly $36$ fixed points, 
all contained in the closed orbit $\QQ_5\times \QQ_5$. 
\end{prop}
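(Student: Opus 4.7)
The plan is to analyze the $T$-fixed points on each of the three $(G_2\times G_2)$-orbits of $DG$ separately. First I would handle the open orbit, which as a homogeneous $(G_2\times G_2)$-space is isomorphic to $G_2$ with isotropy the diagonal $G_2$. Since the diagonal $G_2$ has rank only $2$ while $T$ has rank $4$, no conjugate of the diagonal can contain $T$, so the open orbit contributes no $T$-fixed point.

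Next, for the codimension-one orbit $\cO_1$, I would invoke the $(G_2\times G_2)$-equivariant fibration
$$\cO_1 \longrightarrow X_{ad}(\fg_2)\times X_{ad}(\fg_2), \qquad x\longmapsto ([N_x],[N'_x]),$$
coming from Proposition 7, whose fiber over $(y,y')$ is the full-rank locus inside $\PP(N_y\otimes N'_{y'})$. Any $T$-fixed point of $\cO_1$ must project to a $T$-fixed point of the base. But over such a $(y,y')$, each of $N_y$ and $N'_{y'}$ is a $2$-dimensional irreducible representation of the type $A_1$ Levi of $P_2$, so each splits under $T$ as the sum of two distinct weight lines. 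Consequently $N_y\otimes N'_{y'}$ splits under $T$ as the sum of four distinct weight lines, and the four $T$-fixed points of $\PP(N_y\otimes N'_{y'})$ are all pure tensors of tensor rank one, hence lie in the closed orbit rather than in $\cO_1$. So $\cO_1$ contains no $T$-fixed point either.

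Finally, on the closed orbit $\QQ_5\times\QQ_5\simeq (G_2/P_1)\times (G_2/P_1)$ the number of $T$-fixed points is computed by the Bruhat decomposition, giving $(|W(G_2)|/|W(P_1)|)^2 = (12/2)^2 = 36$, which matches the asserted count. The only non-routine step is the observation that every $T$-fixed line in $\PP(N_y\otimes N'_{y'})$ is a pure tensor, and the mild obstacle is checking that the two weights of $T$ on each null-plane bundle fiber really are distinct; both points follow from the irreducibility of $N$ as a homogeneous bundle on $X_{ad}(\fg_2)$. Everything else reduces to a comparison of ranks of tori and a count of cosets of Weyl groups.
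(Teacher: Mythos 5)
Your argument is correct, and it takes a genuinely different route from the paper. The paper works linearly in $\PP\Delta$: it analyzes the $T$-weight decomposition of $D_z = \CC\oplus V_7\otimes V'_7$, notes that the $36$ weight spaces whose weights $\pm\alpha_i\pm\alpha'_j$ have multiplicity one in $\Delta$ are automatically spanned by pure spinors (because $\Delta$ is minuscule and these are $T_+$-weight spaces as well), and then disposes of the two-dimensional zero-weight space $W_0$ by an explicit spinor computation showing $\PP W_0\cap DG=\emptyset$. This buys an explicit list of representative fixed points $e_{ij}$, $e_{ii'j7}$, $e_{ijj'7}$, $e_{ii'jj'}$, which the paper uses afterward to describe the $T$-stable lines. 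Your argument instead works orbit by orbit: a rank comparison kills the open orbit; the equivariant fibration $\cO_1\to X_{ad}(\fg_2)\times X_{ad}(\fg_2)$ (your Proposition 7 is Proposition 8 in the paper's numbering) together with the observation that the four $T$-weight lines of $N_y\otimes N'_{y'}$ are distinct and consist of rank-one tensors kills $\cO_1$; and the Bruhat count $\bigl(|W(G_2)|/|W(P_1)|\bigr)^2 = 6^2$ finishes the closed orbit. This is cleaner and more conceptual, at the cost of not producing explicit coordinates for the fixed points. The one step worth spelling out is the distinctness of the four weights $\mu_i+\nu_j$ on $N_y\otimes N'_{y'}$: it is immediate not merely from irreducibility of the null-plane bundle (which gives $\mu_1\neq\mu_2$ and $\nu_1\neq\nu_2$) but also from the fact that $\mu_1-\mu_2$ lies in $\ft_1^*$ and $\nu_1-\nu_2$ in $\ft_2^*$, so no cross-cancellation $\mu_1+\nu_1=\mu_2+\nu_2$ can occur; you gesture at this but it deserves to be stated.
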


\proof Recall that the linear span of $DG$ is the projectivization of $V_7\otimes V'_7$. 
Moreover, $G_2$ acts on $V_7$ with weights $0, \pm\alpha_1, \pm\alpha_2, \pm\alpha_3$ with 
$\alpha_1+\alpha_2+\alpha_3=0$. The weights of the action of $G_2\times G_2$ on $V_7\otimes V'_7
\oplus\CC$
are thus the $\pm\alpha_i, \pm\alpha'_j, \pm\alpha_i\pm\alpha'_j$, all with multiplicity one, 
and $0$ with multiplicity two. Let $W_0$ be the two-dimensional zero weight space. 
To ensure that $T$ acts on $DG$ with finitely many fixed points, the only thing we need to 
check is that the projective line $\PP W_0$ is not contained in $DG$. But this is clear, 
since this line contains $[z]$, which is not contained in $\SS_{14}$ and a fortiori not
in $DG$. 

We claim, more precisely, that:
\begin{enumerate}
\item every $T$-fixed point with non zero weight is contained in $DG$, 
\item $DG\cap \PP W_0$ is empty. 
\end{enumerate}
The first statement is clear, since $\Delta$ being minuscule, each fixed point in $\PP\Delta$
of a maximal torus of $Spin_{14}$ is contained in $\SS_{14}$. Since $T$ is a subtorus of 
a maximal torus $T_+$ of $Spin_{14}$, this remains true for all the $T$-fixed points with
non zero weight, just because they are also $T_+$-fixed points.  

To check the second statement, we may suppose that 
$e_1, e_2, e_3, f_1, f_2$, $f_3, e_7-f_7$ are $T$-eigenvectors in $V_7$, with weights $\alpha_1, 
\alpha_2, \alpha_3, -\alpha_1, -\alpha_2, -\alpha_3, 0$; and similarly for $V'_7$. 
Then the $T$-invariants in $L_z$ are $e_7.z$ and $f_7.z$. From that we deduce that 
$$W_0=\langle 1+e_{123456}, e_{1237}+e_{4567}\rangle .$$
We need to check that $W_0$ contains no pure spinor. Observe that if an element of 
$\Delta$ of the form $1+\omega_2+\omega_4+\omega_6$ is a pure spinor, then $\omega_4$ must be proportional
to $\omega_2\wedge\omega_2$ and $\omega_6$ must be proportional
to $\omega_2\wedge\omega_2\wedge\omega_2$. This already rules out all the points of 
$W_0$ except the multiples $x_0=e_{1237}+e_{4567}$. But recall that a spinor $x$ 
is pure when the space of elements $v\in V_{14}$ such that $vx=0$ is seven dimensional.
A straightforward check shows that $x_0$ is only killed by (multiples of) $e_7$, 
hence is not pure. 
\qed 

\medskip 
An immediate consequence is:

\begin{coro} 
The Chow ring of $DG$ is free of rank $36$. 
\end{coro}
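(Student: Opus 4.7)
The plan is to apply the Bia\l ynicki-Birula decomposition theorem. Since $DG$ is smooth and projective and, by the preceding proposition, the torus $T$ acts on $DG$ with exactly $36$ fixed points, I would first choose a generic one-parameter subgroup $\lambda:\CC^*\ra T$ such that $DG^\lambda=DG^T$ (any $\lambda$ avoiding the finitely many kernels of weights appearing in the tangent spaces $T_pDG$ at the fixed points will do).

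Then, for each fixed point $p$, I would consider the attracting cell
$$C_p=\{x\in DG \;:\; \lim_{t\to 0}\lambda(t)\cdot x=p\}.$$
By Bia\l ynicki-Birula's theorem, each $C_p$ is isomorphic to an affine space, of dimension equal to the number of positive $\lambda$-weights in $T_pDG$, and $DG=\bigsqcup_p C_p$ is a locally closed decomposition into $36$ affine cells.

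From a cellular decomposition by affine spaces of a smooth projective variety, one obtains immediately that the Chow groups are free abelian, generated by the closures of the cells, with total rank equal to the number of cells. I would then simply conclude that $A^*(DG)$ is free abelian of rank $36$. The only minor point to be careful about is the choice of $\lambda$: this is not really an obstacle since the set of ``bad'' one-parameter subgroups lies in a finite union of proper hyperplanes of the cocharacter lattice (the kernels of the nonzero $T$-weights appearing in any $T_pDG$, $p\in DG^T$), so a generic choice works.
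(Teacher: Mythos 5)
Your argument is correct and is precisely what the paper has in mind: the corollary is stated as an ``immediate consequence'' of the proposition counting $36$ torus-fixed points, and the intended mechanism is exactly the Bia\l ynicki--Birula decomposition into affine cells (which the paper invokes explicitly in the subsequent subsection on Schubert varieties). Nothing to add.
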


\smallskip
Explicitly, the $T$-fixed points correspond to the weight vectors in $\Delta$ 
of type $e_{ij}$, $e_{ii'j7}$, $e_{ijj'7}$, $e_{ii'jj'}$ where $1 \le i,i'\le 3$ 
and $4 \le j,j'\le 6$. Note that two fixed points $e_{ij}$ (respectively $e_{ijkl}$)
and $e_{abcd}$ are connected 
by a $T$-stable line if and only if $\{i,j\}\subset \{a,b,c,d\}$
(respectively $\{a,b,c,d\}$ and $\{i,j,k,l\}$ have three elements in common). 

%\begin{prop}
%The GKM graph of $DG$ has $196$ edges, and each of the $36$ vertices has valency $12$. 
%\end{prop}

\subsection{Schubert varieties}

Since the maximal torus $T$ of $G_2\times G_2$ acts on $DG$ with finitely many fixed points, 
the Bialynicki-Birula decomposition yields, for any choice of a general rank one 
subtorus, a stratification of $DG$ into affine spaces, which is uniquely defined 
up to conjugation. The closures of those affine spaces will be called Schubert 
varieties. Their classes in the (equivariant) Chow ring, called the (equivariant) 
Schubert classes, form a basis. A priori, we should be able to describe these
equivariant Schubert classes by localization, 
and then their multiplication rule. A more modest
goal would be to compute a Pieri formula in the classical Chow ring. This would
allow to get the degrees of the Schubert varieties, which would give lots of informations
on the restriction map from the spinor variety. In the case of $CG$, the restriction 
map from the ambient Grassmannian is surjective, so the multiplicative structure 
of the Chow ring of $CG$ can be deduced. 

\smallskip
In the case of a wonderful compactification $\bar{G}$ of an adjoint semisimple group $G$, 
the Schubert classes are indexed
by $W\times W$ and the Betti numbers are given by the following formula:
$$b_{2i}(\bar{G})=\#\{(u,v)\in W\times W, \; \ell(u)+\ell(v)+m(v)=i\},$$
where $\ell$ is the classical length function, and $m$ is the simple length function, 
defined as the number of simple roots that are sent to negative roots \cite{brioninfinity}. 
Recall that 
the Weyl group of $G_2$ is isomorphic with the dihedral group $D_6$, and in 
particular has $12$ elements: two elements in each length from $1$ to $5$, 
and one element of length $0$ and $6$. All have simple length $1$, except the 
maximal one (whose simple length are $0$ and $2$).
This yields the even Betti numbers of $\bar{G}_2$:
$$b_{2\bullet}(\bar{G}_2)= 1, 2, 4, 8, 12, 16, 19, 20, 19, 16, 12, 8, 4, 2, 1.$$
In order to deduce the Betti numbers of $DG$, we just need to recall that $\bar{G}_2$
can be obtained by blowing-up $\QQ_5\times\QQ_5$ in $DG$. This modifies the Betti 
numbers by the Betti numbers of a $(\PP^2-\PP^0)$-bundle over $\QQ_5\times\QQ_5$. 
We readily deduce:

\begin{prop} The Poincar\'e polynomial of the variety $DG$ is
$$P_{DG}(t)=\frac{1-t^{12}}{1-t^2}(1+t^6+t^8+t^{10}+t^{12}+t^{18}).$$
\end{prop}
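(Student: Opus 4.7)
The plan is to exploit the blow-up relation $\pi\colon\overline{G_2}\to DG$ along the closed orbit $Y=\QQ_5\times \QQ_5$, which has codimension $4$ in $DG$. By the previous lemma the exceptional divisor $E$ is the projectivization of the rank $4$ bundle $C\boxtimes C'$, i.e.\ a $\PP^3$-bundle over $Y$. The standard formula for the rational cohomology of a blow-up along a smooth center of codimension $c$ then gives, at the level of Poincar\'e polynomials,
$$P_{\overline{G_2}}(t)=P_{DG}(t)+P_Y(t)\bigl(t^2+t^4+\cdots +t^{2(c-1)}\bigr)=P_{DG}(t)+P_Y(t)(t^2+t^4+t^6).$$
So it suffices to know $P_{\overline{G_2}}(t)$ and to subtract the contribution of the exceptional part.

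For $P_{\overline{G_2}}(t)$ I would apply Brion's formula recalled just above, $b_{2i}(\overline{G_2})=\#\{(u,v)\in W\times W : \ell(u)+\ell(v)+m(v)=i\}$. The Weyl group of $G_2$ is the dihedral group $D_6$ of order $12$, with two elements in each of the lengths $1,\ldots,5$ and one element of length $0$ (resp.\ $6$); the simple length $m$ vanishes only on the identity, equals $2$ only on $w_0$, and equals $1$ on the remaining ten elements. A short bookkeeping by the shifted length $\ell(v)+m(v)$ gives the sequence
$$b_{2\bullet}(\overline{G_2})=1,2,4,8,12,16,19,20,19,16,12,8,4,2,1,$$
as stated in the text.

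It remains to do the subtraction. Since $\QQ_5$ has Poincar\'e polynomial $[6]_{t^2}:=\frac{1-t^{12}}{1-t^2}=1+t^2+t^4+t^6+t^8+t^{10}$, one has $P_Y(t)=[6]_{t^2}^2$, and
$$P_{DG}(t)=P_{\overline{G_2}}(t)-[6]_{t^2}^{2}\,(t^2+t^4+t^6).$$
A routine expansion (coefficient by coefficient) yields the palindromic sequence $1,1,1,2,3,4,4,4,4,4,3,2,1,1,1$. One then recognizes this polynomial as the product $[6]_{t^2}\bigl(1+t^6+t^8+t^{10}+t^{12}+t^{18}\bigr)$, which is exactly the claimed factorization; as a sanity check, its total sum is $36$, matching the Euler characteristic $\chi_{top}(DG)=6^2$ obtained from the torus fixed-point count.

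There is essentially no serious obstacle here: once Brion's formula and the blow-up relation have been brought together, the proof is a finite computation. The only real choice is cosmetic, namely finding the nice factored form; noting that the second factor is a palindrome of degree $18$ and that the $\QQ_5$-factor $[6]_{t^2}$ must appear (because of the $\QQ_5\times\QQ_5$ geometry of the closed orbit) makes the factorization easy to guess and then to verify.
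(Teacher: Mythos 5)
Your proposal is correct and follows the same route as the paper: use Brion's formula to get the Betti numbers of the wonderful compactification $\overline{G}_2$, then apply the blow-up formula to subtract the exceptional contribution over $\QQ_5\times\QQ_5$. One small remark: the paper describes the correction as coming from a $(\PP^2-\PP^0)$-bundle, which must be a typo for $(\PP^3-\PP^0)$ since the closed orbit has codimension $4$; your version, which correctly uses $t^2+t^4+t^6$, is the one consistent with the final Betti numbers $1,1,1,2,3,4,4,4,4,4,3,2,1,1,1$ stated in the text.
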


In other words the odd Betti nubers of $DG$ are zero, and the even ones are 
$$b_{2\bullet}(DG)= 1, 1, 1, 2, 3, 4, 4, 4, 4, 4, 3, 2, 1, 1, 1.$$

Note that, as a consequence, the restriction map from $\SS_{14}$ cannot be surjective in degree four. 
In fact there is an obvious special  cohomology class of degree four, that of the closed orbit 
$\QQ_5\times \QQ_5$. Its degree is $4\binom{10}{5}=1008$, while the degrees of the 
restrictions to $DG$ of the degree four Schubert classes can be computed to be 
$$\int_{DG}\tau_4h^{10}=1260, \qquad \int_{DG}\tau_{31}h^{10}=1780.$$
So the class of $\QQ_5\times \QQ_5$ is certainly not an integral combination of 
the restrictions of $\tau_4$ and $\tau_{31}$, and probably not a combination at all. 

\medskip\noindent {\it Question}.
By pull-back, the Chow ring of $DG$ embeds inside the Chow ring of $\overline{DG}$.
Moreover, $\overline{DG}$ being the wonderful compactification of $G_2$, its equivariant 
cohomology ring can be extracted from \cite{strickland} or \cite{bj}. Can we deduce that of $DG$?
The Bialynicki-Birula decomposition of the wonderful compactification has been studied in 
\cite{brioninfinity}. Can one extract a Pieri formula, and push it down to $DG$? 

\section{Some incidences}

\subsection{Incidences for the Cayley Grassmannian}
Let us briefly consider the Cayley Grassmannian $CG\subset G(4,V_7)$, defined by the 
general three-form $\omega$.  The latter also defines a global section of $Q^\vee(1)$ 
on $G(2,V_7)$, whose zero locus is the adjoint variety of $G_2$. Consider the incidence
diagram
\begin{equation*}\label{I10}
\xymatrix{& CI_{10}\ar[dl]_p\ar[dr]^q & \\
CG  & & G(5,V_7)
}
\end{equation*}
where $CI_{10}$ parametrizes the pairs $(U_4\subset U_5)$ such that $U_4$ belongs to $CG$. 
In particular $CI_{10}$ is a $\PP^2$-bundle over $CG$. For $U_5\subset V_7$, the restriction 
of $\omega$ to $U_5$ is dual to a skew-symmetric degree two tensor which can be of rank two or four. In the latter case, the support of this tensor is a hyperplane $U_4\subset U_5$
on which $\omega$ vanishes, and it is the only such hyperplane; this implies that $q$ is
birational. The former case occurs over a locus $X_7$ of codimension three, and the 
corresponding fibers of $q$ are projective planes. We conclude that $q$ is just 
the blowup of $X_7\simeq OG(2,V_7)$. 

\smallskip
There is a slightly different incident diagram
\begin{equation*}\label{I11}
\xymatrix{& CI_{11}\ar[dl]_r\ar[dr]^s & \\
CG  & & X_{ad}(G_2)\subset G(2,V_7)
}
\end{equation*}
where the fibers of $s$ are del Pezzo fourfolds of degree five, and the fibers of $r$
are conics in $X_{ad}(G_2)$. As observed by Kuznetsov, this allows to interprete the Cayley 
Grassmannian $CG$ as the Hilbert scheme of conics on the adjoint variety of $G_2$.

\subsection{Incidences with $DG$} What are the analogs of those incidences when we switch to $DG$? Recall that 
$DG$ is defined by a general element of $\Delta$, which defines a global section of the
irreducible homogeneous vector bundle $\cE_{\omega_6}=U\otimes\cL$ over $\SS_{14}$. 
Over each flag variety $F$ of $Spin_{14}$, there is an irreducible homogeneous vector 
bundle $\cE_{\omega_6}^F$ whose space of sections is $\Delta$. 

Consider for example the flag variety $OF=OF(k,7,V_{14})$ for $k\le 5$, with its two projections 
to $\SS_{14}$ and $OG=OG(k,V_{14})$. %Let $\cF_k$, $\cE$, $\cG_k$ denote the irreducible homogeneous bundles with highest weight $\omega_6$ over these manifolds. 
The ranks of $\cE_{\omega_6}^{OF}$ and $\cE_{\omega_6}^{OG}$ can be read on the following weighted Dynkin diagram (where $k=3$):
\begin{center}
\setlength{\unitlength}{4mm}

\begin{picture}(20,4)(2,-1.5)
\multiput(5,0)(2,0){5}{$\circ$}
\multiput(5.4,.25)(2,0){4}{$\line(1,0){1.7}$}
\put(9,0){$\bullet$}
\put(13.4,.4){\line(1,1){1.3}} 
\put(13.4,.1){\line(1,-1){1.3}} 
\put(14.55,-1.5){$\circ$}
\put(14.55,1.6){$\bullet$}
\put(15.6,-1.45){$\omega_6$}
\end{picture} 
\end{center}

The flag variety $OF$ is defined by the two marked vertices. When we suppress those two
vertices, the connected component of the remaining diagram containing the vertex associated
to $\omega_6$ has type $A_{6-k}$. So $\cE_{\omega_6}^{OF}$, which corresponds to the natural
representation, has rank $7-k$. Similarly, the orthogonal Grassmannian $OG$ is defined by the 
rightmost of the two marked vertices. When we suppress this vertex, the connected component 
of the remaining diagram containing the vertex associated to $\omega_6$ has type $D_{7-k}$. So $\cE_{\omega_6}^{OG}$, which corresponds to a half-spin representation, has rank $2^{6-k}$.

Our general element $z\in\Delta$ defines a general section $s_z$ of the bundle  $\cE_{\omega_6}^{OF}$, 
whose zero locus we denote by $OF_z$. The fibers of the projection to $\SS_{14}$ are Grassmannians
$G(k,7)$, and the restriction of $\cE_{\omega_6}^{OF}$ to each fiber is isomorphic with the quotient 
tautological bundle. In particular, if the restriction of $s_z$ to such a fiber is non identically zero, it vanishes on a copy of $G(k-1,6)$. So the general fiber of the projection from $OF_z$
to $\SS_{14}$ is $G(k-1,6)$, and the special fiber is $G(k,7)$ over $DG$. 

Similarly the projection of $OF$ to $OG$ is a spin manifold $\SS_{14-2k}$, and the restriction of $\cE_{\omega_6}^{OF}$ to each fiber is isomorphic to a spinor bundle. The zero-locus of $s_z$ to
such a fiber depends on its type as an element of the half-spin representation of $Spin_{14-2k}$. 
In fact this representation has finitely many orbits, so there is an induced stratification 
of $OG$ by orbital degeneracy loci of $s_z$, and the type of the fiber of the projection 
from $OF_z$ to $OG$ depends on the strata. 
Let us discuss two cases a little further. 

\subsection{Incidence with $4$-planes}
The case where $k=4$ is special because $Spin_{6}=SL_4$, and in this case the bundle 
$\cE_{\omega_6}^{OG}$ is just a rank four bundle defined by a natural representation 
of $SL_4$, as can be read from the weighted diagram 
\begin{center}
\setlength{\unitlength}{4mm}

\begin{picture}(20,4)(2,-1.5)
\multiput(5,0)(2,0){5}{$\circ$}
\multiput(5.4,.25)(2,0){4}{$\line(1,0){1.7}$}
\put(11,0){$\bullet$}
\put(13.4,.4){\line(1,1){1.3}} 
\put(13.4,.1){\line(1,-1){1.3}} 
\put(14.55,-1.5){$\circ$}
\put(14.55,1.6){$\bullet$}
\put(15.6,-1.45){$\omega_6$}
\end{picture} 
\end{center} 

Similarly $\cE_{\omega_6}^{OF}$ is defined by a natural representation 
of $SL_3$, so on each fiber of the projection from $OF_z$ to $OG$, the section $s_z$ 
vanishes either at one point, or everywhere. We thus get a diagram 
\begin{equation*}\label{I30}
\xymatrix{& OF_z\ar[dl]_p\ar[dr]^q & \\
DG\subset \SS_{14}  & & OG(4,V_{14})\supset SG 
}
\end{equation*}

\noindent
where $q$ is the blowup of a codimension four subvariety $SG\subset OG(4,V_{14})$, 
while $p$ is a $G(3,6)$-fibration over the complement of $DG$ in $\SS_{14}$, with special fibers $G(4,7)$ over $DG$. The weights of the rank four bundle $\cE_{\omega_6}^{OG}$ are $\omega_6$, $s_6(\omega_6)=\omega_5-\omega_6$, $s_5s_6(\omega_6)=\omega_4-\omega_5+\omega_7$ and 
$s_7s_5s_6(\omega_6)=\omega_4-\omega_7$, hence $\det (\cE_{\omega_6}^{OG})=\mathcal{O}(2)$. 
We readily deduce:

\begin{prop} The variety $SG$ is a Fano manifold of dimension $26$, Picard number $1$, and index $7$, 
admitting an action of $G_2\times G_2$. Its Poincar\'e polynomial is 
$$P_{SG}(t)=\frac{1-t^{10}}{1-t^2}(1+t^6)^2\Big (\frac{1-t^{16}}{1-t^4}(1+t^8+t^{10}+t^{12}+t^{20})+t^{16}\Big).$$
\end{prop}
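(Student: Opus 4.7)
The plan is to establish the geometric properties first, then compute the Poincar\'e polynomial from the incidence diagram above. Throughout, $SG$ is the zero locus on $OG(4,V_{14})$ of the section $s_z$ of the irreducible homogeneous rank-$4$ bundle $\cE_{\omega_6}^{OG}$ associated to our general $z\in\Delta$, and the $G_2\times G_2$-action is inherited from the stabilizer of $z$ in $Spin_{14}$.

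The geometric properties come out quickly. Since $\cE_{\omega_6}^{OG}$ is irreducible with nonzero sections it is globally generated, so for general $z$ its zero locus is smooth of the expected codimension $4$: hence $\dim SG = 30-4 = 26$. A direct computation of the sum of positive roots outside the Levi of the maximal parabolic $P_4\subset Spin_{14}$ gives Fano index $9$ for $OG(4,V_{14})$; combined with $\det\cE_{\omega_6}^{OG}=\cO(2)$ as recorded in the text, adjunction yields $-K_{SG}=7H_{|SG}$, so the Fano index of $SG$ is $7$. The Picard number is $1$ by a Sommese-type Lefschetz theorem for zero loci of sufficiently positive globally generated bundles: the codimension $4$ is well within the range in which $\mathrm{Pic}(OG(4,V_{14}))\to\mathrm{Pic}(SG)$ is an isomorphism.

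For the Poincar\'e polynomial I would compute $[OF_z]$ in two different ways and equate them. On the $q$-side, $OF_z$ is the blow-up of $OG(4,V_{14})$ along the codimension-$4$ subvariety $SG$, so
$$
P_{OF_z}(t)=P_{OG(4,V_{14})}(t)+(t^2+t^4+t^6)P_{SG}(t).
$$
On the $p$-side, $OF_z\to\SS_{14}$ is a Zariski-locally trivial $G(3,6)$-bundle over $\SS_{14}\setminus DG$, and a Zariski-locally trivial $G(4,7)$-bundle over $DG$ (on which $s_z$ vanishes identically on each fibre, because $s_z$ identifies with the section of $\cU\otimes\cL\simeq p_*\cE_{\omega_6}^{OF}$ that cuts out $DG$). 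A stratification argument then gives
$$
P_{OF_z}(t)=\bigl(P_{\SS_{14}}(t)-P_{DG}(t)\bigr)P_{G(3,6)}(t)+P_{DG}(t)P_{G(4,7)}(t).
$$
Equating and dividing by $t^2+t^4+t^6$ yields $P_{SG}(t)$. Plugging in the Gaussian binomials $P_{G(k,n)}(t)=\binom{n}{k}_{t^2}$, the Schubert-cell expressions for $P_{\SS_{14}}(t)$ and $P_{OG(4,V_{14})}(t)$, and $P_{DG}(t)$ from Proposition 14, the claimed factorized formula emerges after polynomial manipulation. The main obstacle is verifying Zariski local triviality of $p$ on both strata, without which the second expression for $P_{OF_z}$ would not hold: over $DG$ this follows from $OF\to\SS_{14}$ being an associated bundle to the principal $P_7$-bundle $Spin_{14}\to\SS_{14}$; away from $DG$ it amounts to straightening the nowhere vanishing section of $\cU\otimes\cL$, reducing the fibrewise zero locus to the fixed Schubert subvariety $G(3,6)\subset G(4,7)$. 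Once this is settled the rest of the computation is mechanical, if a bit tedious.
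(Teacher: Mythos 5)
Your argument is the natural unpacking of the paper's one-line ``we readily deduce'': the dimension, index $9-2=7$ and Picard number come from the codimension, adjunction and the Lefschetz/Betti-number argument, while the Poincar\'e polynomial is obtained exactly as you propose, by equating the blow-up expression $P_{OG(4,14)}+(t^2+t^4+t^6)P_{SG}$ for $P_{OF_z}$ with the stratified-fibration expression $(P_{\SS_{14}}-P_{DG})P_{G(3,6)}+P_{DG}P_{G(4,7)}$, and your worries about Zariski local triviality are addressed correctly. As a sanity check, evaluating both sides at $t=1$ gives $(64-36)\cdot 20+36\cdot 35-560=1260=3\cdot 420$, consistent with $\chi_{top}(SG)=420$; like the paper, you leave the final polynomial simplification implicit, but the approach is the same.
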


This means the odd Betti numbers of $SG$ are zero, and the even ones are 
$$b_{2\bullet}(SG)= \scriptstyle{1, 1, 2, 4, 6, 8, 12, 16, 20, 25, 29, 33, 35, 36, 35, 33, 29, 25, 20, 16, 12, 8, 6, 4, 2, 1, 1.}$$
The topological Euler characteristic is $420$. It would be interesting to know if the action 
of $G_2\times G_2$ is quasi-homogeneous. 

\subsection{Incidence with $2$-planes}
Over the orthogonal Grassmannian $OG(2,14)$, 
the bundle $\cE^{OG}_{\omega_6}$  has rank $16$ and is induced from a half-spin representation of $Spin_{10}$. 
Since $OG(2,14)$ has dimension $21$, the general section of $\cE^{OG}_{\omega_6}$ defined by $z$ must vanish in dimension $5$ (or possibly, nowhere), and its zero locus $Z_z$ must be 
stable under the action of 
$G_2\times G_2$. 

\begin{prop} $Z_z$ is the disjoint union of two copies of $X_{ad}(G_2)$. \end{prop}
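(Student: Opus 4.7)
\medskip\noindent\textbf{Proof plan.}
The strategy is to identify $Z_z$ by equivariance and dimension, then rule out extra components. Since $z$ has stabilizer $G_2\times G_2$ in $Spin_{14}$, the zero locus $Z_z$ is $G_2\times G_2$-invariant, and its expected dimension $\dim OG(2,V_{14})-\mathrm{rk}(\cE^{OG}_{\omega_6})=21-16=5$ matches $\dim X_{ad}(G_2)$, which is encouraging.

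First I would exhibit the two candidate components. The null $2$-planes for the $G_2$-structure on $V_7$ form a copy of $X_{ad}(G_2)\subset OG(2,V_7)\subset OG(2,V_{14})$, and similarly inside $OG(2,V'_7)$. These two loci are disjoint (since $V_7\cap V'_7=0$) and each is irreducible of dimension $5$. To verify both lie in $Z_z$, $G_2\times G_2$-equivariance reduces us to a single representative, for instance $N=\langle e_1+e_2,\ f_1-f_2\rangle$ from Section~3. At $N$ we have $N^\perp/N\simeq ((N^\perp\cap V_7)/N)\oplus V'_7$, a $3+7=10$-dimensional quadratic space, and $\cE^{OG}_{\omega_6}|_N$ is a half-spin representation of $Spin(N^\perp/N)$, decomposing as $\Delta_2\otimes \Delta'_8$ under $Spin_3\times Spin(V'_7)$. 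Via the Clifford decomposition $\Delta_8\simeq \wedge^\bullet N\otimes \Delta_2$ of the spin representation of $Spin(V_7)$, the component of $z=\delta\otimes\delta'$ landing in the fiber is governed by the Clifford action of $N$ on $\delta$, and vanishes precisely because $N$ is a null-plane of the octonion multiplication (equivalently, $\delta$ is $G_2$-invariant and killed by $N$ in the appropriate Clifford component). This vanishing can also be verified directly from the explicit table of $e_i.z,\ f_i.z$ after the change of basis $v_k\leftrightarrow e_k,f_k$ of Section~3. At this stage both copies of $X_{ad}(G_2)$ are irreducible components of $Z_z$ of the expected dimension, so $Z_z$ contains their disjoint union.

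The main obstacle is then to prove that these exhaust $Z_z$. I would handle this in one of two ways. The conceptually cleaner route is a Chern class computation: verify that $c_{16}(\cE^{OG}_{\omega_6})\in CH^{16}(OG(2,V_{14}))$ equals $2\,[X_{ad}(G_2)]$ inside the Schubert basis (where the two summands correspond to the two embeddings into $OG(2,V_7)$ and $OG(2,V'_7)$). Combined with the reducedness of the generic zero locus, this gives $Z_z=X_{ad}(G_2)\sqcup X_{ad}(G_2)$ set-theoretically, and indeed scheme-theoretically. Alternatively, one can enumerate the finitely many $G_2\times G_2$-orbits on $OG(2,V_{14})$ (a spherical variety for this action) by the intersection data $(\dim(U_2\cap V_7),\dim(U_2\cap V'_7))$ together with the $G_2$-type invariants distinguishing null from generic isotropic subspaces, pick a representative of each, and check by direct computation that $s_z$ is non-zero except precisely on the two null-plane orbits. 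In either route, the technical work lies in ruling out the intermediate orbits (those with $U_2$ meeting both $V_7$ and $V'_7$ nontrivially, or those strictly containing but not equal to a null line); this is where I expect the real computation to be concentrated.
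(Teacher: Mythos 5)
Your containment step is essentially the paper's: both arguments use the factorization $z=\delta\otimes\delta'$ to reduce the Clifford condition to one on $\Delta_7$ alone when $P\subset V_7$. The paper phrases this more cleanly as: the condition $P\in Z_z$ becomes $\wedge^2 P\cdot\Delta_7\subset\delta^\perp$, a codimension-two condition on $OG(2,V_7)$ cutting out $X_{ad}(G_2)$ — you instead propose verifying it at a representative null-plane; both are fine.

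Where you diverge is the exhaustion step, and here the paper's route is both cleaner and more airtight. The paper tensors the Koszul resolution of $\cO_{Z_z}$ by $\cO$ and applies Borel–Weil–Bott to show that the only nonvanishing cohomology of the terms $\wedge^i(\cE^{OG}_{\omega_6})^\vee$ are $H^0(\wedge^0)=\CC$ and $H^4(\wedge^4)=\CC$, giving $h^0(\cO_{Z_z})=2$ and hence at most two connected components; combined with the containment and the expected-dimension count this finishes the argument. Your first alternative — computing $c_{16}(\cE^{OG}_{\omega_6})$ and matching it to $2[X_{ad}(G_2)]$ — has a real gap: knowing the class equals $2[X_{ad}(G_2)]$ only identifies the zero locus up to possible embedded or lower-dimensional components, and you invoke ``reducedness of the generic zero locus'' without justification (the section is not a generic section of $\cE^{OG}_{\omega_6}$ but one coming from the special orbit, so reducedness and purity need to be established, not assumed). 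Moreover it is not at all clear a priori that the $16$th Chern class, expanded in Schubert classes of $OG(2,14)$, will visibly be $2[X_{ad}(G_2)]$ rather than some other degree-$16$ combination that happens to agree numerically. Your second alternative, enumerating $G_2\times G_2$-orbits on $OG(2,V_{14})$ and checking $s_z$ on each, would work but is considerably more laborious than the paper's one-line BBW computation, and you would still have to convert set-theoretic vanishing into the scheme-theoretic statement. So: right endpoints and a correct containment argument, but the key step that makes the proposition go through — bounding the number of components via Koszul and Bott — is the one you haven't supplied a working substitute for.
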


\begin{proof} 
Recall that our general element $z$ of $\Delta$ determined an orthogonal decomposition 
$V_{14}=V_7\oplus V'_7$ and a tensor decomposition $\Delta=\Delta_7\otimes\Delta'_7$ 
such that $z=\delta\otimes\delta'$ for some general elements $\delta$ and $\delta'$
of $\Delta_7$ and $\Delta'_7$.  

Given an orthogonal plane $P$, consider the Plücker line $\wedge^2P$. The image  
of the Clifford multiplication map 
$$\wedge^2P\otimes\Delta \subset \wedge^2V_{14}\otimes\Delta \lra\Delta$$
is a sixteen dimensional space $\cG_P\subset\Delta$, and we can identify $\cG$ with 
$\cF^\vee$ (recall that $\Delta$ is self-dual). This implies that $P$ belongs to 
$Z_z$ if and only if $\cG_P\subset\omega^\perp$. 

Now suppose that $P\subset V_7$. The Clifford action of $P$ on  $\Delta=\Delta_7\otimes\Delta'_7$
is just given by its action of $\Delta_7$, so we deduce that $P$ belongs to $Z_z$ if and
only if $\wedge^2P.\Delta_7\subset\delta^\perp$. This is a codimension two condition on
$OG(2,V_7)$, that defines the adjoint variety $X_{ad}(G_2)$.

We conclude that  $Z_z$ contains the disjoint union of $X_{ad}(G_2)$ and $X'_{ad}(G_2)$, the adjoint varieties of our two copies of $G_2$.
In order to prove equality, we just need to check that  $Z_z$ has at most two connected
components. For this we can use the Koszul resolution of the structure sheaf of $Z_z$. 
A direct computation shows that the only non zero cohomology groups of the wedge powers 
of the dual of 
$\cE_{\omega_6}^{OG}$ are $H^0(\wedge^0(\cE_{\omega_6}^{OG})^\vee)=H^4(\wedge^4(\cE_{\omega_6}^{OG})^\vee)=\CC$. We readily deduce that $h^0(\cO_{Z_z})=2$, and this concludes the proof. 
\end{proof}

Taking the incidence between $DG$ and $Z_z$ we get the following diagram:
\begin{equation*}\label{I13}
\xymatrix{& & DI_{13}\ar[dl]_{2:1}\ar[dr]^t & \\
DG& D\ar@{^{(}->}[l] & & X_{ad}(G_2)\amalg X'_{ad}(G_2)
}
\end{equation*}
where the fibers of $t$ are codimension two linear sections of $\SS_{10}$.

\section{Linear subspaces}

Since $DG$ has dimension $14$ and index $7$, the expected dimension of the space 
of lines on $DG$ is $14+7-3=18$. The expected dimension of the space of lines
through a general point, or of the VMRT, is $5$. 

\begin{prop}
The variety $F_1(DG)$ of lines on $DG$ is a smooth Fano manifold of dimension $18$, 
Picard number one, and index $4$.
%quasi-homogeneous under the action of $G_2\times G_2$.
\end{prop}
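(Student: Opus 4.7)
My plan is to realize $F_1(DG)$ as the zero locus of a natural section of a homogeneous vector bundle on $F_1(\SS_{14})$ and deduce the claimed properties by Bertini and Lefschetz-type arguments.

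First, I identify $F_1(\SS_{14})$. Two pure spinors in the chosen half-spin family are joined by a line in $\SS_{14}$ precisely when their intersection is $5$-dimensional, and the corresponding line is the pencil of maximal isotropic $7$-planes (in that family) containing the common $5$-plane. Hence $F_1(\SS_{14}) \simeq OG(5, V_{14}) = \mathrm{Spin}_{14}/P_5$, a smooth homogeneous Fano variety of dimension $30$, Picard number one and Fano index $8$ (summing the $30$ positive roots of $D_7$ outside the Levi of type $A_4 \times A_1 \times A_1$ yields $8\omega_5$).

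Next, pass to the incidence $\mathcal{I} := OF(5, 7; V_{14})$ with projections $q : \mathcal{I} \to \SS_{14}$ and $p : \mathcal{I} \to OG(5, V_{14})$, the latter a $\PP^1$-bundle corresponding to the fixed ruling of the $4$-dimensional quadric $W^\perp/W$. On each line $\ell = \ell_W \subset \SS_{14}$ one has $\cU|_\ell \simeq \cO^5 \oplus \cO(-1)^2$, hence $\cE|_\ell \simeq \cO(1)^5 \oplus \cO^2$. Consequently $R^1 p_*(q^*\cE) = 0$ and $\cF := p_*(q^*\cE)$ is a $\mathrm{Spin}_{14}$-homogeneous vector bundle of rank $12$ on $OG(5, V_{14})$ with $H^0(\cF) = \Delta$, and $F_1(DG)$ is the scheme-theoretic zero locus of the section $\sigma_z := p_*(q^*s_z) \in H^0(\cF)$.

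Smoothness and the expected dimension $18 = 30 - 12$ then follow from Kleiman's transversality, since $\cF$ is globally generated and $\mathrm{Spin}_{14}\cdot z$ is open in $\Delta$. For the Picard number, ampleness of $\cF$ together with Sommese's Lefschetz-type theorem give the isomorphism $H^i(OG(5, V_{14}), \ZZ) \overset{\sim}{\to} H^i(F_1(DG), \ZZ)$ for $i < 18$, forcing $\mathrm{Pic}(F_1(DG)) \simeq \ZZ$. For the Fano index, adjunction reduces the problem to computing $c_1(\cF)$. Writing $\mathcal{S}_5^\perp/\mathcal{S}_5 \simeq \mathcal{A}\otimes\mathcal{B}$ for the rank-$2$ bundles arising from the $\mathrm{Spin}_4$-structure on the quadratic bundle $\mathcal{S}_5^\perp/\mathcal{S}_5$, with $\mathcal{S}_5$ the universal isotropic $5$-plane and $M$ the line bundle defined by $q^*\cL = \cO_p(1)\otimes p^*M$, push-forward via $p$ of the tautological extension on $\mathcal{I}$ yields
\begin{equation*}
0 \lra (\mathcal{S}_5 \otimes M) \otimes \mathcal{A}^\vee \lra \cF \lra \mathcal{B}\otimes M \lra 0.
\end{equation*}
A direct computation, using $c_1(\mathcal{A}) + c_1(\mathcal{B}) = 0$ (from $\det(\mathcal{S}_5^\perp/\mathcal{S}_5) = \cO$) and $2c_1(M) = -c_1(\mathcal{S}_5) - c_1(\mathcal{B})$ (from $q^*\cL^{-2} = \det(q^*\cU)$), gives $c_1(\cF) = -4c_1(\mathcal{S}_5) = 4H$, so that $-K_{F_1(DG)} = (8H - 4H)|_{F_1(DG)} = 4H|_{F_1(DG)}$, establishing the Fano index $4$.

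The main obstacle is verifying ampleness of $\cF$ (needed for Sommese's theorem, and not implied by global generation alone); this reduces to a representation-theoretic positivity check on the Jordan--H\"older factors of the displayed extension via their explicit $c_1$ in $\mathrm{Pic}(OG(5, V_{14})) = \ZZ H$.
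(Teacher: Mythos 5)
Your construction of the variety $F_1(DG)$ is essentially the one in the paper: both pass through the incidence $OF(5,7,V_{14})$ and realize $F_1(DG)$ as the zero locus of the rank-$12$ globally generated bundle $\cF=p_*(q^*(\cU\otimes\cL))$ on $OG(5,V_{14})$, with $H^0(\cF)=\Delta$ and $\det\cF=\cO(4)$; the smoothness/dimension/index conclusions then agree. Your short exact sequence for $\cF$ is the same as the paper's $0\to\cV\otimes\cE_6\to\cF\to\cE_7\to 0$, written in slightly different notation, and your $c_1$ computation is correct.

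The genuine gap is in the Picard-number step, and it is fatal for the route you chose. The bundle $\cF$ is \emph{not} ample, so Sommese's Lefschetz theorem does not apply. Indeed, a quotient of an ample bundle is ample, but the rank-two quotient $\cE_7$ (irreducible homogeneous of highest weight $\omega_7$, your $\mathcal{B}\otimes M$) fails to be ample already on a line: a line $C\subset OG(5,V_{14})$ corresponds to the simple root $\alpha_5=\epsilon_5-\epsilon_6$, and the two weights of $\cE_7$ are $\omega_7=\tfrac12(\epsilon_1+\cdots+\epsilon_7)$ and $\omega_7-\alpha_7$, which pair with $\alpha_5^\vee$ to give $0$ and $1$ respectively. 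Hence $\cE_7|_C\simeq\cO\oplus\cO(1)$, which has a trivial summand, so $\cE_7$ is only nef, not ample, and therefore neither is $\cF$. (This is consistent with the splitting $\cE|_\ell\simeq\cO(1)^5\oplus\cO^2$ you observed on lines of $\SS_{14}$: the trivial piece does not go away under pushforward.) The paper instead proves $\rho(F_1(DG))=1$ by a geometric argument: it considers the point--line incidence $I_{19}$ with its $\PP^1$-bundle $q:I_{19}\to F_1(DG)$ and the evaluation $p:I_{19}\to DG$, and observes that the fiber of $p$ over the codimension-one orbit $\cO_1\subset DG$ is irreducible (only the fiber over the closed, codimension-four orbit breaks into pieces). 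Since $\mathrm{Pic}(DG)=\ZZ$ and the only locus over which $p$ could acquire extra divisor classes sits in codimension at least two, this already forces $\mathrm{Pic}(F_1(DG))\simeq\ZZ$. You should replace your Lefschetz step by this (or an equivalent) geometric analysis of the point--line incidence.
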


\proof The variety of lines on $\SS_{14}$ is the orthogonal Grassmannian 
$OG(5,14)$, whose dimension is $30$. The weights $\omega_1,\omega_6,\omega_7$
define irreducible homogeneous vector bundles of ranks $5$, $2$, $2$ on 
$OG(5,14)$: the first one is $\cV^\vee$, the dual of the tautological 
bundle, and we denote the other ones by $\cE_6$ and $\cE_7$. Their determinant 
line bundles are all equal to $\cO(1)$, the restriction of the Pl\"ucker line bundle. 
Note moreover that 
$$\cE_6\otimes\cE_7\simeq (\cV^\perp/\cV)(1).$$
Consider the incidence diagram, where $OF(5,7,14)=D_7/P_{5,7}$, 
\begin{equation*}\label{linesDG}
\xymatrix{& \cU\otimes\cL\ar[d] & OF(5,7,14)\ar[ld]_p\ar[dr]^q & \cF\ar[d] \\
DG\ar@{^{(}->}[r] &\SS_{14}  & & OG(5,14) &  F_1(DG)\ar@{^{(}->}[l] 
}
\end{equation*}
Since $DG$ is defined by a general section $s$ of the bundle $\cE=\cU\otimes\cL$ on 
$\SS_{14}$, its variety of lines $F_1(DG)$ will be defined by a section of
the bundle $\cF=q_*p^*\cE$ on $OG(5,14)$. Obviously there is an exact sequence
$$0\lra  q^*\cV\otimes p^*\cL\lra p^*(\cU\otimes \cL)\lra (p^*\cU/q^*\cV)\otimes p^*\cL\lra 0$$
on $OF(5,7,14)$. We claim that this pushes forward on $OG(5,14)$ to 
$$0\lra  \cV\otimes \cE_6\lra \cF\lra \cE_7\lra 0.$$
We deduce that $\cF$ has rank $12$, and that the space of its global sections is again $\Delta$. By construction $\cF$ is globally generated, so $F_1(DG)$ is smooth of dimension
$30-12=18$, being the zero-locus of a general section. Since moreover $\det \cF=\cO(4)$, 
we deduce that $F_1(DG)$ is Fano of index $4$.

In order to check that  $F_1(DG)$ has Picard number one, consider the point-line
incidence correspondence 
\begin{equation*}\label{I19}
\xymatrix{& I_{19}\ar[dl]_p\ar[dr]^q & \\
DG  & & F_1(DG). 
}
\end{equation*}

\noindent 
Of course $q$ is just a $\PP^1$-bundle. The fibers of $p$ are of three different types, 
over the three orbits in $DG$. A computation shows that the fiber over the closed orbit 
is the union of two copies of $\PP^2\times \PP^3$ blown-up at one point, while the 
other orbits are irreducible. We could in principle compute the Hodge polynomials of
the three fibers and deduce that of $F_1(DG)$, but the simple fact that the fiber
over the codimension one orbit $\cO_1\subset DG$ is irreducible already implies that 
the Picard number of $F_1(DG)$ is one, as claimed. 
\qed

\medskip
The generic fiber of $p$ is the variety of lines in $DG$ through a general point. 
It is isomorphic with its image in the tangent space, the {\it variety of minimal
rational tangents} (VMRT).

\begin{prop}
The VMRT at a general point of $DG$ is a copy of the adjoint variety $X_{ad}(G_2)\subset\PP\fg_2$.
\end{prop}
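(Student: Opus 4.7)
The plan is to identify the tangent space at a general point as the adjoint $G_2$-module, and then exploit the extremely restricted list of closed $G_2$-orbits in $\PP\fg_2$ to pin down the VMRT by dimension alone.

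First, let $x_0$ denote the identity coset in the open orbit of $DG$, viewed as a compactification of $G_2$. Under the action $(g,h)\cdot x=gxh^{-1}$ of $G_2\times G_2$, the isotropy group at $x_0$ is the diagonal $G_2$, and the tangent space $T_{x_0}DG\simeq(\fg_2\oplus\fg_2)/\fg_2^{\mathrm{diag}}$ is naturally a copy of the adjoint representation of this diagonal subgroup. Consequently the VMRT at $x_0$ is a closed subvariety of $\PP\fg_2$ stable under $G_2$.

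Next, I would bound its dimension from above using the previous proposition. The incidence variety $I_{19}\to F_1(DG)$ is a $\PP^1$-bundle, so has dimension $19$, while $p\colon I_{19}\to DG$ is surjective because $DG$ is covered by lines; its general fiber (the space of lines through $x_0$) has dimension $19-14=5$. The VMRT, being the image in $\PP T_{x_0}DG$ of such a fiber under the tangent-direction map, has dimension at most $5$.

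Finally, I would apply the orbit structure of $G_2$ acting on $\PP\fg_2$. Since $\fg_2$ is an irreducible $G_2$-module, $\PP\fg_2$ contains a unique closed orbit, the adjoint variety $X_{ad}(G_2)$, of dimension $5$; the remaining orbits (subregular nilpotent, regular nilpotent, semisimple) have strictly larger dimension. Any non-empty closed $G_2$-invariant subvariety of $\PP\fg_2$ therefore contains $X_{ad}(G_2)$ and has dimension at least $5$. Combined with the upper bound, this forces the VMRT to be exactly $X_{ad}(G_2)$.

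The argument essentially reduces to combining the two standard ingredients (tangent identification at a group element of a group compactification, and the orbit picture in an adjoint representation) with the dimension count already contained in the previous proposition; no substantial obstacle is expected. The only minor point of care is the surjectivity and the relative dimension of $p$, which follows from the smoothness of $F_1(DG)$ and the fact that $DG$ is Fano of Picard number one. Note that one needs neither the irreducibility of the VMRT nor finiteness of the tangent map: the upper and lower dimension bounds together with uniqueness of the closed orbit do all the work.
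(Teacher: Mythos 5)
Your argument is correct and follows the same overall strategy as the paper (dimension count from $F_1(DG)$, then the orbit structure of $G_2$ on $\PP T_xDG$), but you streamline the crucial last step. The paper establishes that the VMRT is a five-dimensional, $G_2$-stable, closed subvariety of $\PP T_xDG=\PP^{13}$, argues separately that it contains no $G_2$-fixed point (via a computation in $\PP D_z$ exploiting that $[z]\notin DG$), concludes that it must be one of the two five-dimensional closed orbits $\QQ_5=G_2/P_1$ or $X_{ad}(G_2)=G_2/P_2$, and finally rules out $\QQ_5$ because it does not embed $G_2$-equivariantly into $\PP^{13}$. You short-circuit all of this by observing explicitly that $T_{x_0}DG\simeq(\fg_2\oplus\fg_2)/\fg_2^{\mathrm{diag}}\simeq\fg_2$ is the adjoint module, so that $\PP T_{x_0}DG=\PP\fg_2$ has a \emph{unique} closed $G_2$-orbit (irreducibility of $\fg_2$), namely $X_{ad}(G_2)$; the no-fixed-point check and the embeddability comparison become automatic. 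This is a genuine economy, though the geometry is ultimately the same. One minor imprecision: the $G_2$-orbit structure of $\PP\fg_2$ is richer than the four types you mention (there is a one-parameter family of semisimple orbits and several nilpotent strata), but the only fact you actually use — uniqueness and minimal dimension of the closed orbit in the projectivization of an irreducible module — is standard and correct, so this does not affect the argument.
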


\proof The general point $x$ of $DG$ has stabilizer $G_2$, 
so the VMRT at $x$ is a five dimensional subvariety, stable under $G_2$, 
and equivariantly embedded inside $\PP T_xDG=\PP^{13}$. This VMRT must contain
a closed $G_2$-orbit, and it contains no fixed point because the restriction 
of $D_z=V_7\oplus V'_7\oplus\CC$ to the diagonal $G_2$ contains a unique stable plane,
but the corresponding line, since it contains $[z]$, is not contained in $DG$. Since 
the minimal non trivial closed $G_2$-orbits are $G_2/P_1=\QQ_5$ and $G_2/P_2=X_{ad}(G_2)$;
both of dimension five, the VMRT must be one of these. Since it is equivariantly
embedded inside $\PP^{13}$, it must be the second one. \qed

\smallskip It was already observed in \cite{bf} that the VMRT at a general point of 
the wonderful compactification of an adjoint simple algebraic group  is a copy
of its adjoint variety (except in type $A$). The only special feature in our 
situation is that the minimal rational curves are lines in the spinor variety. 

\begin{coro}
$DG$ contains planes, but no higher dimensional linear spaces, 
passing through the general point. 
\end{coro}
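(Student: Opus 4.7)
\proof
The plan is to exploit the VMRT description established in the preceding proposition. Let $x$ be a general point of $DG$; then the variety of minimal rational tangents at $x$ is a copy of the adjoint variety $X_{ad}(G_2) \subset \PP T_x DG = \PP^{13}$. Any linear subspace $L \subset DG$ of dimension $d$ passing through $x$ induces, via the tangent map, a linear subspace $\PP T_x L \subset \PP T_x DG$ of dimension $d-1$ contained in the VMRT. So the question reduces to determining the maximal dimension of a linear subspace of $X_{ad}(G_2)$ in its natural embedding.

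First I would note that $X_{ad}(G_2)$ contains lines: indeed, its Fano variety of lines is $\QQ_5 = G_2/P_1$, of dimension five. The main step is to show $X_{ad}(G_2)$ contains no $\PP^2$. For this I would invoke the classical fact that $X_{ad}(G_2)$ is a contact Fano fivefold whose VMRT at a general point is a twisted cubic curve in $\PP^3$ (reflecting the contact grading of $\fg_2$, in which the component transverse to the contact line is the irreducible $SL_2$-representation $S^3$). If a plane $\Pi \simeq \PP^2$ were contained in $X_{ad}(G_2)$, then at each point $y \in \Pi$ the tangent $\PP^1 = \PP T_y \Pi$ would be a line contained in the VMRT of $X_{ad}(G_2)$ at $y$; but a twisted cubic contains no line, a contradiction.

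For the existence of planes in $DG$ through a general point, I would apply the Cartan--Fubini style lifting principle of Hwang--Mok (cf. \cite{hm}): since $X_{ad}(G_2)$ is covered by a positive-dimensional family of lines, each line through a point of the VMRT $\mathcal{C}_x$ lifts to a $\PP^2 \subset DG$ through $x$. Combined with the non-existence of $\PP^2$'s in $X_{ad}(G_2)$, which forbids $\PP^3 \subset DG$ through $x$, this concludes the proof.

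The main obstacle is the geometric input that $X_{ad}(G_2)$ contains no plane. The argument above reduces it to the standard description of the VMRT of the adjoint variety of $G_2$; a completely self-contained alternative would be a direct weight-space analysis of the $GL_2$-action on $T_{eP_2}(G_2/P_2)$, showing that no two-dimensional subspace of the contact distribution is isotropic for the contact form and tangent to a projective plane in $X_{ad}(G_2)$. \qed
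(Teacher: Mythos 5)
Your proposal is correct in substance, and it takes the natural route the corollary is clearly meant to be read from: translate linear subspaces of $DG$ through the general point $x$ into linear subspaces of the VMRT $\mathcal{C}_x = X_{ad}(G_2)$, then use the geometry of $X_{ad}(G_2)$. The reduction ``$\PP^d \subset DG$ through $x$ induces $\PP^{d-1} = \PP T_x L \subset \mathcal{C}_x$'' is fine, and your argument that $X_{ad}(G_2)$ contains lines but no planes, via the fact that its own VMRT is the twisted cubic in $\PP^3$ (the $S^3\CC^2$-piece of the contact grading), is a standard and correct fact. The paper does not spell out a proof at all, so there is nothing explicit to compare against.

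The one place where your write-up is off the mark is the appeal to Hwang--Mok's Cartan--Fubini extension for the existence of planes. That result concerns extending local equivalences of VMRT-structures and is not what is needed here; moreover, invoking it obscures the fact that in this particular situation the lifting is elementary. The paper notes explicitly, just before the corollary, that the minimal rational curves on $DG$ are \emph{lines} in $\SS_{14} \subset \PP^{49}$; hence $\mathcal{C}_x$ is literally the set of tangent directions of lines of $\PP^{49}$ lying on $DG$ through $x$, and for each $v\in\mathcal{C}_x$ the line $\ell_v$ through $x$ with tangent $v$ is uniquely determined already in the ambient projective space. Given a projective line $L'\subset \mathcal{C}_x = X_{ad}(G_2)$, the lines $\ell_v$ for $v\in L'$ sweep out exactly the plane $\langle x, L'\rangle \simeq \PP^2$ of $\PP^{49}$ (every point of that plane other than $x$ lies on the unique line through $x$ in the direction of its projection from $x$). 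Since each $\ell_v\subset DG$, the whole plane is contained in $DG$. In other words, for a prime Fano manifold whose minimal rational curves are lines in a fixed embedding, linear $\PP^{k}$'s through $x$ correspond \emph{exactly} to linear $\PP^{k-1}$'s inside $\mathcal{C}_x$, both existence and non-existence, with no deformation-theoretic input. Replacing the Cartan--Fubini reference with this two-line cone argument would make the proof both complete and self-contained.
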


In fact we have seen that $DG$ also contains a ten-dimensional family of $\PP^3$'s, 
parametrized by $X_{ad}(G_2)\times X_{ad}(G_2)$. But they only cover the 
codimension one orbit closure (and there is exactly one of them through
the general point). 
%More generally, we can expect that cones over Schubert varieties in $X_{ad}(G_2)$ 
%define Schubert varieties inside $DG$. 
 
\section{Some numerology}
Let us conclude this paper by a couple of slightly esoteric observations.
The Cayley Grassmannian and its double appear in two series of compactifications of 
symmetric spaces, as follows: 
$$\begin{array}{cccc}
X= & SL_3/SO_3\subset\PP^5, &  SO_5/GL_2\subset\QQ^3\times\QQ^3, & G_2/SO_4\subset CG, \\ 
Y= & PSL_3\subset\PP^8, & SO_5\subset\SS_{10}, & G_2\subset DG.
\end{array}$$
Each of these compactifications contains a unique closed orbit, and blowing it yields
the wonderful compactification. Let $a=1,2,4$ for the three members of each series. 
The closed orbit $Z$ in the first series has dimension $a+1$ and codimension $3$. 
The closed orbit $Z'$ in the second series is isomorphic with 
$Z\times Z$ , so its dimension is $2a+2$, while its codimension is  $4$. In fact each $Z$
in the series admits 
a homogeneous rank two vector bundle $C$ such that its normal bundle is isomorphic with
$Sym^2C$, while the normal bundle to $Z'$ is isomorphic with $C\boxtimes C$. 

The Weyl group $W$ has cardinality $2a+4$. Recall that the Chow ring of the wonderful
compactification has a basis indexed by $W\times W$, so that the Euler topological 
characteristic $\chi_{top}(\bar{G})=(\# W)^2 =4(a+2)^2$. A computation shows that 
the minimal compactification $Y$ as Euler characteristic 
$$\chi_{top}(Y)=\frac{1}{4}\chi_{top}(\bar{G})=(a+2)^2. $$
Does it admit a natural basis indexed by $\bar{W}\times \bar{W}$, where $\bar{W}=W/\ZZ_2$?

\smallskip

\end{document}